\newcommand{\Q}{\mathbb Q}
\newcommand{\R}{\mathbb R}
\newcommand{\F}{\mathbb F}
\newcommand{\ord}{{\rm ord}}
\newcommand{\C}{\mathbb C}
\newcommand{\Z}{\mathbb Z}
\newcommand{\Irr}{{\rm Irr}}
\newcommand{\Gal}{{\rm Gal}}
\newcommand{\e}{{\rm e}}
\newcommand{\eps}{\varepsilon}
\renewcommand{\d}{{\rm d}}
\newcommand{\rd}{{\rm rd}}
\newcommand{\V}{\mathcal V}
\newcommand{\M}{\mathcal M}
\newcommand{\p}{\mathfrak p}
\newtheorem{X}{X}[section]
\newtheorem{corollary}[X]{Corollary}
\newtheorem{lemma}[X]{Lemma}
\newtheorem{proposition}[X]{Proposition}
\newtheorem{theorem}[X]{Theorem}
\theoremstyle{remark}
\newtheorem*{remark}{Remark}
\newtheorem{ex}{Example}
\title{Moments in the Chebotarev density theorem:\\
non-Gaussian families}
\author{R\'egis de la Bret\`eche}
\address{Université  Paris Cité, Sorbonne Université, CNRS, 
Institut de Mathématiques de Jussieu-Paris Rive Gauche,
 F-75013 Paris,  
France}
\email{regis.delabreteche@imj-prg.fr}
\author{Daniel Fiorilli}
\address{Univ. Paris-Saclay, CNRS, Laboratoire de mathématiques d'Orsay, 91405, Orsay, France.} 
\email{daniel.fiorilli@universite-paris-saclay.fr}
\author{Florent Jouve}
\address{Univ. Bordeaux, CNRS, Bordeaux INP, IMB, UMR 5251,  F-33400, Talence, France.}
\email{florent.jouve@math.u-bordeaux.fr}
\date{\today}
\dedicatory{À la mémoire de Joël Bellaïche}
\keywords{Chebotarev density theorem, explicit formul\ae\ in arithmetic, moment computations.}
\subjclass[2020]{11R42, 11R44, 11R45, 11N64}
\begin{document} 

\begin{abstract}

{In this paper we investigate higher moments attached to the Chebotarev Density Theorem. Our focus is on the impact that peculiar Galois group structures have on the limiting distribution. Precisely we consider in this paper the case of groups having a character of large degree.  Under the Generalized Riemann Hypothesis, we prove in particular that there exists families of Galois extensions of number fields having doubly transitive Frobenius group for which no Gaussian limiting distribution occurs.}
%
%
%
%
%
\end{abstract}

\maketitle

\section{Introduction}

Let $L/K$ be a Galois extension of number fields, $G=\Gal(L/K)$ and let $t\colon G \rightarrow \mathbb R$ be a class function, typically $t=\mathds{1}_C$, the indicator function of a conjugacy class $C$ of $G$. The goal of this paper is twofold: we wish to understand the moments of the variance as well as the average of the $n$-th moment (those quantities will be precisely defined in~\eqref{eq:s-mom-var} and~\eqref{eq:averaged moment} respectively) of the prime counting function
\begin{equation}\label{eq:psi}
 \psi_\eta(x;L/K,t):=\sum_{\substack{\mathfrak p\triangleleft \mathcal O_K \\ m\geq 1  }} t(\varphi_\mathfrak p^m) \frac{\log (\mathcal N\mathfrak p)}{\mathcal N \mathfrak p^{\frac m2}} \eta\big(\log (\mathcal N\mathfrak p^m/x)\big)\,, 
 \end{equation}
for any test function $\eta$ in the space $\mathcal S_\delta$ defined below and assuming that $G$ has an irreducible representation of very large degree. For more background on the relevance of this type of counting function, see~\cites{Be,FJ,BF2} as well as the references therein. The reason why we are interested in those groups is that, as we will see, they give rise to non-Gaussian moments, in contrast with what was obtained in the cyclotomic case~\cite{BF2} (more precisely, $K=\Q$, $L=\Q(\zeta_q)$ and $t=\mathds{1}_{1\bmod q} - \frac 1{\phi(q)}$). In~\eqref{eq:psi}, $\varphi_\p$ (resp. $\mathcal N\mathfrak p$) denotes the Frobenius at (resp. the norm of) the prime ideal $\p$ (see~\cite{BFJ}*{(3)} for details on the ramified case) and $\eta$ is a weight function satisfying properties which we now recall.

We let $\delta>0$ and $\mathcal S_\delta\subset \mathcal L^1(\mathbb R)$ be the set of all non-trivial differentiable even $ \eta \colon\R \rightarrow \R$ such that for all $t\in \R$,
$$\eta (t) ,  \eta'(t) \ll \e^{-(\frac 12 +\delta) |t|}, $$
and moreover for all $\xi \in \R$, we have that
\begin{equation}
 0 \leq \widehat \eta(\xi) \ll (|\xi|+1)^{-1} (\log(|\xi|+2))^{-2-\delta}.
 \end{equation}
Here\footnote{Throughout the paper we will use the notation $f(x)\ll_{A}g(x)$ and $f(x)=O_{A}(g(x))$ synonymously to mean that $|f(x)|\leq C(A)|g(x)|$ for big enough $x$ and for an implicit constant $C(A)$ depending only on the sequence of parameters $A$. When implicit constants are absolute, the index $A$ will be omitted.}, the Fourier transform is defined by
 $$  \widehat \eta(\xi) := \int_{\mathbb R} \e^{-2\pi i \xi u} \eta(u) \d u\,.$$
 Finally for any $h\in\mathcal L^1(\mathbb R)$ we define
 $$
 \alpha(h):=\int_{\R} h(t)\,{\rm d}t\,.
 $$

First we notice that, with the smooth weight $\eta\in\mathcal S_\delta$, the Chebotarev density theorem reads
$$ \psi_\eta(x;L/K,t) = x^{\frac 12} \mathcal L_\eta(\tfrac 12)({ \widehat t(1)}+o(1))\qquad  (x\to\infty)\,,$$
where, denoting by $\Irr(G)$ the set of irreducible characters of $G$, we define 
$$ \widehat t(\chi):= \langle t,\chi\rangle_G = \frac 1{|G|}\sum_{g\in G} t(g)\overline{\chi(g)}\,, \qquad 
 \mathcal L_{\eta}(u):= \int_{\mathbb R} \e^{ux} \eta(x)\d x\,\qquad (\chi\in\Irr(G)).$$
Note that $\mathcal L_{\eta}(u)=\mathcal L_{\eta}(-u)$.
Secondly, it follows from an analysis as in~\cite{FJ}*{Proposition 3.18} 
that under the Riemann Hypothesis for all Artin $L$-functions $L(s,L/K,\chi)$ (an assumption that will be denoted GRH throughout the paper), the remainder term $ \psi_\eta(x;L/K,t) -  {\widehat t(1)} x^{\frac 12} \mathcal L_\eta(\tfrac 12)$ has average value equal to $\widehat \eta(0)z(L/K,t)$, where we define
$$ z(L/K,t):= \sum_{\chi \in \Irr(G)}\widehat t(\chi)\ord_{s=\frac 12}L(s,L/K,\chi)\,.$$
Therefore we are naturally led to introduce the \emph{centered} smooth counting function:
\begin{equation}\label{eq:psi*}
\psi^*_\eta(\e^u,L/K,t):=\psi_\eta(\e^u,L/K,t)-\widehat{t}(1)\e^{\frac u2} \mathcal L_\eta(\tfrac 12)- \widehat \eta(0)z(L/K,t).
\end{equation}

For $\eta\in\mathcal S_\delta$, and $u\geq 0$, we consider the $n$-th moment
\[
\M_n(u;L/K,\eta) := \frac 1{|G|} \sum_{C\in G^\sharp} |C|\big(
\psi^*_\eta(\e^u;L/K,t_C)\big)^n\,,
\]
where $G^\sharp$ is the set of conjugacy classes of $G$ and we denote $t_C=\tfrac{|G|}{|C|}\mathds{1}_C$. We have that ${\widehat {t_C}(\chi)} =\overline{ \chi(C)}$ and thus $z(L/K,t_C) \in \mathbb R$ (by the functional equation satisfied by $L(s,L/K,\chi)$).
Equivalently, we may take the average over all $g\in G$ of the $n$-th power of the error term in the Chebotarev density theorem corresponding to the conjugacy class $C_g$ of $g$. We define $\mathcal U$ to be the set of even non-trivial integrable functions $\Phi \colon \R \rightarrow \R $ such that $\Phi,\widehat \Phi \geq 0$, and we consider for $U>0$, $\Phi\in\mathcal U$, $\eta\in\mathcal S_\delta$, $n=2$ and $s\in \mathbb N$, the moments of the variance
\begin{equation}\label{eq:s-mom-var} \V_{2,s}(U;L/K,\Phi,\eta):= \frac 1{U \int_0^{\infty} \Phi} \int_0^\infty \Phi(\tfrac uU)\Big( \M_2(u;L/K,\eta) - m_2(L/K;\eta) \Big)^s \d u\,,
\end{equation}
where
\begin{equation}\label{eq:m2}
m_2(L/K;\eta) := \lim_{U\rightarrow\infty} \frac 1U \int_2^U \M_2(u;L/K,\eta) \d u\,.
\end{equation}

We will show that under Artin's holomorphy  conjecture for $L(s,L/K,\chi)$ for all non-trivial $\chi\in\Irr(G)$ (which we will denote by AC) and GRH, the integral in~\eqref{eq:s-mom-var} converges and the limit defining $m_2$ exists. Note that in the statement of Theorem~\ref{theorem main} we assume AC for the arbitrary Galois extension $L/K$ while in Theorem~\ref{theorem main 2} AC is known to hold for the particular type of Galois extensions considered.

The goal of this paper is to study the special case when $\Irr(G)$ has one irreducible character of ``large degree'', that is there exists $\vartheta\in\Irr(G)$ such that $ \vartheta(1)^2 \geq 3|G|/4$. Note that if there exists an irreducible character $\vartheta$ of $G$ such that $2\vartheta(1)^2>|G|$ then it is unique and real. Indeed if $\vartheta_1$ and $\vartheta_2$ are distinct, irreducible, and satisfy this large degree property (\emph{e.g.} if $\vartheta_1$ is a non real character of this type and $\vartheta_2=\overline{\vartheta_1}$) then we would get $|G|<\vartheta_1(1)^2+\vartheta_2(1)^2\leq\sum_{\chi\in\Irr(G)}\chi(1)^2=|G|$; a contradiction.
We introduce the following notation: 
\begin{equation}\label{eq:lambdaj}
\lambda_j(G):=\sum_{\chi \in \Irr(G)} \chi(1)^j\,,\qquad (j\geq 1)\,.
\end{equation}


Our first main result establishes a lower bound on the $s$-th moment of the variance~\eqref{eq:s-mom-var}. We let
$$\mu_0:=1\,,\qquad \mu_{r} := \begin{cases} (2n-1) \cdot (2n-3) \cdots 1 & \text{ if } r=2n\geq 2,\\
0 & \text{ if } r=2n+1.
\end{cases}$$ 
denote $r$-th moment of the Gaussian, and for a number field $L$, we let ${\rm rd}_L:=d_L^{\frac 1{[L:\Q]}}$, where $d_L$ is the absolute value of the absolute discriminant of $L$. We also recall the Odlyzko bound stating that there exists an absolute constant $C>0$ such that ${\rm rd}_L \geq C$ (\cite{Odl}*{p. 120}). As all results in the literature on moments of prime-counting functions, this suggests a Gaussian distribution for prime ideals in Chebotarev classes. However, note that in our second main result (see Theorem~\ref{theorem main 2} below), we will construct a framework allowing to find explicit families in which Gaussian distribution does not occur.

\begin{theorem}
\label{theorem main}
Let $L/K$ be a Galois extension and assume AC and GRH. Let $\eta\in \mathcal S_\delta$ and $\Phi\in \mathcal U$. Assume moreover that there exists a (necessarily real) character $\vartheta\in\Irr(G)$ such that $ |G|-f(|G|) \leq \vartheta(1)^2 \leq |G|-1 $ where $f$ is a function such that $1\leq f(|G|)\leq \frac14{|G|}$. Then for $s\in \mathbb N$ and $U \geq 1$ we have the estimate
\begin{multline*}
    \V_{2,s}(U;L/K,\Phi,\eta)\geq H_s \big(\alpha(|\widehat \eta|^2)[K:\Q] |G|^{\frac 12}\log(\rd_L) \big)^s\Big( 1+O\Big(\frac{sf(G)^{\frac 12}}{|G|^{\frac 12}}+ \frac{ \kappa_\eta^s }{ \log\log(\rd_L+2)}\Big)\Big) 
    \\
+O_{\Phi}\Big( \frac{(\kappa_\eta[K:\Q]^2 (\log (\rd_L+2))^2 
\lambda_1(G)^2)^{s} }U\Big),
\end{multline*} 
where
 \begin{equation}
      H_s:=\sum_{j=0}^s \binom{s}{j}(-1)^{s-j} \mu_{2j},
      \label{def Hs}
 \end{equation}
 and where $\kappa_\eta>0$ is a constant which depends on $\eta$.
\end{theorem}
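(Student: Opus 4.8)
The plan is to start from the explicit formula. Under GRH and AC, an analysis as in \cite{FJ}*{Proposition 3.18} expresses the error term attached to a class $C$,
$$E(u;C):=\tfrac{|G|}{|C|}\psi_\eta(\e^u;L/K,1_C)-\e^{\frac u2}\mathcal L_\eta(\tfrac12)-\widehat\eta(0)z\big(L/K,\tfrac{|G|}{|C|}1_C\big),$$
as $E(u;C)=-\sum_{\chi\in\Irr(G)}\overline{\chi(C)}A_\chi(u)+(\text{error})$, where $A_\chi(u):=\sum_{\gamma_\chi}\widehat\eta(\tfrac{\gamma_\chi}{2\pi})\e^{iu\gamma_\chi}$ and $\gamma_\chi$ runs over the ordinates of the non-trivial zeros of $L(s,L/K,\chi)$, which lie on the critical line by GRH. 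Forming $\M_2(u;L/K,\eta)=\frac1{|G|}\sum_C|C|\,|E(u;C)|^2$ and invoking the orthogonality of irreducible characters on conjugacy classes collapses the double sum over $\chi$ to the diagonal, so that $\M_2(u;L/K,\eta)=\sum_\chi|A_\chi(u)|^2$ up to controlled errors. The $u$-independent (diagonal in the zeros) part is exactly $m_2(L/K;\eta)=\sum_\chi V_\chi$ with $V_\chi:=\sum_{\gamma_\chi}\widehat\eta(\tfrac{\gamma_\chi}{2\pi})^2$, whence
$$\M_2(u;L/K,\eta)-m_2(L/K;\eta)=\sum_{\chi\in\Irr(G)}\sum_{\gamma_\chi\neq\gamma_\chi'}\widehat\eta\Big(\tfrac{\gamma_\chi}{2\pi}\Big)\widehat\eta\Big(\tfrac{\gamma_\chi'}{2\pi}\Big)\e^{iu(\gamma_\chi-\gamma_\chi')}.$$

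The structural feature I would exploit is \emph{positivity}. Raising this identity to the $s$-th power, integrating against $\Phi(u/U)$, and using that $\Phi$ is even produces, for each configuration of zeros, the factor $\tfrac1{\int_\R\Phi}\widehat\Phi\big(\tfrac{U\beta}{2\pi}\big)$, where $\beta=\sum_{i=1}^s(\gamma_{\chi_i}-\gamma_{\chi_i}')$ is the total frequency. Since $\widehat\eta\ge0$ (as $\eta\in\mathcal S_\delta$) and $\widehat\Phi\ge0$ (as $\Phi\in\mathcal U$), every such term is nonnegative, so for a lower bound I may discard all but a convenient subfamily. I would keep only $\chi_1=\dots=\chi_s=\vartheta$ together with the diagonal configurations, namely those for which the multiset $\{\gamma_{\vartheta,i}\}_i$ equals $\{\gamma_{\vartheta,i}'\}_i$; these have $\beta=0$ and weight $\widehat\Phi(0)/\int_\R\Phi=1$. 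Counting them subject to the within-factor constraint $\gamma_i\neq\gamma_i'$ reproduces precisely the combinatorics of the centred Gaussian moment $\mathbb E[(X^2-1)^s]=H_s$: the ``$-1$'' is built in because the diagonal $V_\vartheta$ has already been subtracted, which is exactly why $H_s$ replaces the raw moment $\mu_{2s}$. Thus the retained contribution equals $H_sV_\vartheta^s$ up to two corrections: a Wick/CLT error coming from the non-Gaussianity of $A_\vartheta(u)$ (and from near-diagonal configurations with $\beta$ small but nonzero), which yields the $s^2/\log\log(\rd_L+2)$ term, and the contribution of the remaining characters, whose total weight is $\sum_{\chi\neq\vartheta}\chi(1)^2=|G|-\vartheta(1)^2\le f(|G|)$, which yields the $sf(|G|)^{1/2}/|G|^{1/2}$ term.

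It then remains to evaluate $V_\vartheta$. By the Weil explicit formula the counting density of the $\gamma_\vartheta$ at height $t$ is $\tfrac1{2\pi}\log\mathfrak q(\vartheta,t)$, where the analytic conductor satisfies $\log\mathfrak q(\vartheta,t)=\log\big(d_K^{\vartheta(1)}\mathcal N\mathfrak f(\vartheta)\big)+[K:\Q]\vartheta(1)\log(|t|+O(1))+O(1)$. Since $\widehat\eta(t/2\pi)^2$ decays rapidly and $\int_\R\widehat\eta(t/2\pi)^2\,\d t=2\pi\alpha(|\widehat\eta|^2)$, the archimedean part contributes only $O([K:\Q]\vartheta(1))$, carrying no factor $\log\rd_L$, while the arithmetic part gives the main term $\alpha(|\widehat\eta|^2)\log\big(d_K^{\vartheta(1)}\mathcal N\mathfrak f(\vartheta)\big)$. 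Using $\vartheta(1)^2=|G|(1+O(f(|G|)/|G|))$, the conductor--discriminant formula $\log d_L=|G|\log d_K+\sum_\chi\chi(1)\log\mathcal N\mathfrak f(\chi)$, and the fact that under the hypotheses $\vartheta$ dominates this sum, one obtains $V_\vartheta=\alpha(|\widehat\eta|^2)[K:\Q]|G|^{1/2}\log(\rd_L)\,(1+o(1))$, which is the announced scale. Combining with the previous paragraph gives the main term $H_s\big(\alpha(|\widehat\eta|^2)[K:\Q]|G|^{1/2}\log(\rd_L)\big)^s$ with the stated multiplicative error.

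Finally, the additive $O\big((\kappa_\eta[K:\Q]^2(\log(\rd_L+2))^2\lambda_1(G)^2)^s/U\big)$ term absorbs the quantities I would treat crudely: the error in the explicit formula for $E(u;C)$ (ramified primes, higher prime powers, truncation) and the discrepancy between the finite-$U$ average and the limiting mean $m_2$. These are controlled by the pointwise bound $|A_\chi(u)|\ll\kappa_\eta[K:\Q]\chi(1)\log(\rd_L+2)$, which gives $|\M_2(u)-m_2|\ll\big(\sum_\chi|A_\chi(u)|\big)^2\ll\kappa_\eta[K:\Q]^2(\log(\rd_L+2))^2\lambda_1(G)^2$; raising to the $s$-th power and integrating the residual contribution, which is supported on a bounded range in $u$, produces the $1/U$ saving after normalisation. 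The main obstacle is the combinatorial heart of the second paragraph: extracting $H_sV_\vartheta^s$ with an \emph{explicit} error \emph{without} assuming linear independence of the zeros. On the lower-bound side positivity disposes of all off-diagonal and non-$\vartheta$ configurations for free, but pinning down the secondary term $s^2/\log\log(\rd_L+2)$ requires a quantitative estimate for accidental coincidences among differences of zeros, as well as careful bookkeeping of the $\gamma_i\neq\gamma_i'$ exclusions so that the Gaussian moment $H_s$, rather than $\mu_{2s}$, is what finally emerges.
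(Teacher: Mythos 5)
Your overall strategy is the same as the paper's: apply the explicit formula and orthogonality of characters to write $\M_2-m_2$ as a double sum over zeros with $\gamma_\chi\neq\gamma_\chi'$, expand the $s$-th power, integrate against $\Phi(u/U)$ to produce a factor $\widehat\Phi\big(\tfrac{U\beta}{2\pi}\big)$, and use the positivity of $\widehat\eta$ and $\widehat\Phi$ to discard all but a chosen subfamily of configurations with $\beta=0$ supported on $\chi_1=\dots=\chi_s=\vartheta$; then evaluate the diagonal sum $b_0(\vartheta;|\widehat\eta|^2)$ via the Artin conductor of $\vartheta$. The error analysis you sketch (the $O(1/U)$ term from the pointwise bound on $\psi_\eta^*$, and the $s^2/\log\log$ term from coincidences/multiplicities of zeros, handled in the paper by the Carneiro--Chandee--Milinovich bound) also matches.

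However, there is a genuine gap at the combinatorial heart. The subfamily you retain --- configurations for which the multiset $\{\gamma_{\vartheta,i}\}_i$ equals $\{\gamma_{\vartheta,i}'\}_i$, subject to $\gamma_{\vartheta,i}\neq\gamma_{\vartheta,i}'$ --- is parametrized by derangements of $\{1,\dots,s\}$, so it produces only $D_s$ independent pairing patterns, each contributing roughly $b_0(\vartheta;|\widehat\eta|^2)^s$. Since $D_2=1<H_2=2$ and $D_3=2<H_3=8$, this yields a strictly weaker constant than the stated $H_s$. The missing idea is that $\vartheta$ is real, so the zero set of $L(s,L/K,\vartheta)$ is symmetric under $\gamma\mapsto-\gamma$, and one must also retain the configurations in which two \emph{unprimed} ordinates (or two \emph{primed} ordinates) are matched with opposite signs. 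The paper encodes this by summing over the set $\mathcal H_s$ of fixed-point-free involutions $\pi$ of $\{1,\dots,s\}\times\{1,2\}$ with $\pi(j,1)\neq(j,2)$, setting $\gamma_{j,k}=\eps_{j,k}\gamma_{\pi(j,k)}$ with a sign depending on whether the match is primed--unprimed or not; the identity $|\mathcal H_s|=H_s$ is then proved by inclusion--exclusion against $\mu_{2s}$ (Lemma~\ref{lemma combinatorics}). Your closing remark that the bookkeeping should turn $\mu_{2s}$ into $H_s$ is the right instinct, but the family you actually describe does not realize it.

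Two smaller misattributions: the error $O\big(sf(|G|)^{1/2}/|G|^{1/2}\big)$ does not come from ``the contribution of the remaining characters'' (those are nonnegative and simply discarded in a lower bound); it comes from comparing $\log A(\vartheta)$ with $\vartheta(1)[K:\Q]\log\rd_L$ and $\vartheta(1)$ with $|G|^{1/2}$, via the pointwise bound $|\vartheta(g)|\le f(|G|)^{1/2}$ for $g\neq 1$ (Lemma~\ref{lemma evaluation Artin conductor}). Likewise the $s^2/\log\log(\rd_L+2)$ term arises from the excluded coincidences $\gamma_j\in\{\pm\gamma_1,\dots,\pm\gamma_{j-1}\}$ and from zero multiplicities, not from ``near-diagonal'' configurations with $\beta$ small but nonzero, which positivity lets you keep or drop at no cost.
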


\begin{remark} The quantity
$H_s$ is the $s$-th moment of $Z^2-1$, where $Z\sim {\mathcal N}(0,1)$, the standard Gaussian. Indeed, the $s$-th moment of $Z^2-1$ is equal to
$$ \mathbb E \big[(Z^2-1)^s\big] = \frac 1{\sqrt{2\pi}} \int_{\mathbb R} (t^2-1)^s \e^{-\frac{t^2}{2}} \d t = \sum_{j=0}^s \binom{s}{j}(-1)^{s-j} \frac 1{\sqrt{2\pi}} \int_{\mathbb R} t^{2j} \e^{-\frac{t^2}{2}} \d t =H_s.  $$
Note that $H_1=0$ and morever $H_{2k+1} >0$ for all $k\geq 1$ (this follows from the recurrence $H_{s+2} = 2(s+1)(H_{s+1}+H_{s}) $ for $s\in \mathbb N$).
\end{remark}

We now give an intuitive reason why we should expect the moments $H_s$ in Theorem~\ref{theorem main}.
By Parseval's identity, we have that  
\begin{equation}
 \M_2(u;L/K,\eta) = \sum_{\chi \in \Irr(G)} \big|\psi_\eta^*(\e^u;L/K,\chi)\big|^2. 
 \label{eq:parseval}
\end{equation}
In the setting of Theorem~\ref{theorem main}, we expect this sum to be dominated by the contribution of the (unique) character $\chi=\vartheta$ of large degree. Moreover, we expect $\psi_\eta^*(\e^u;L/K,\chi)$ to have Gaussian moments (see~\cite{FJ},~\cite{BFJ}), hence we expect $\M_2(u;L/K,\eta)$ to be approximately the square of a Gaussian. As a result, we expect $\V_{2,s}(U;L/K,\Phi,\eta)$ to be the centered moments of the square of a Gaussian, which is what we obtain.

In addition to computing the higher centered moments of the variance $\M_2(u;L/K,\eta) $, we also compute the first moment of all moments $\M_n(u;L/K,\eta)$ for particular non-abelian Galois groups $G$.
More precisely we will be interested in the moment
\begin{equation}
     M_{n,1} (U;L/K,\Phi,\eta):= \frac 1{U \int_0^\infty \Phi}  \int_0^\infty \Phi(\tfrac uU) \M_n (u ;L/K,\eta)
\d u. 
\label{eq:averaged moment}
\end{equation}

The cyclotomic case where $G\simeq (\Z/q\Z)^\times$ was studied in~\cite{BF1}; in this context  we expect to have Gaussian moments.  Our next result focuses on the case where the group $G$ has order $d(d+1)$ for some integer $d\geq 2$, and where $G$ admits an irreducible representation of degree $d$ (specifically $G$ is a doubly transitive Frobenius group as defined \textit{e.g.} in~\cite{Hup}*{Prop. 6.5},
see also~\S\ref{section:LargeDegree}). 
We will show that for this family of groups, the associated moments are not necessarily Gaussian. 

\begin{theorem}
\label{theorem main 2}
Let $G$ be a doubly transitive Frobenius group of order $d(d+1)\geq 6$ with kernel $N$ and abelian complement $H$. Let $L/K$ be a Galois extension of number fields of group $G$ and assume GRH. Let $\eta\in \mathcal S_\delta$ an $\Phi\in \mathcal U$.  Then for $m\in \mathbb N$ and $U\geq 1$ and assuming that $\rd_L \geq \e^{Cm}$ with $C>0$ a large enough constant, we have the estimate
\begin{multline}\label{eq:borneinfTh1.2}
    M_{2m,1}(U;L/K,\Phi,\eta)\geq  \frac{\mu_{2m}d^{2m}}{d(d+1)}\widehat \eta^{2m}(0) \Big((d-1) [K:\Q]\log (\rd_L)\Big)^m 
    \Big(1+O_{\eta}\Big(\frac m{\log(\rd_L+2) }  \Big)\Big)\hspace{-1cm}\\ +O_{\Phi}\Big(\frac{ (d+1)^{4m-2} (\kappa_\eta [K:\Q] \log(\rd_L+2))^{2m}}U\Big),
\end{multline} 
where $\kappa_\eta>0$ is a large enough constant depending on $\eta$.

Moreover, the normalized moment satisfies the following asymptotic lower bound\footnote{Note that we expect the ``true size'' of this moment to be asymptotically $\mu_{2m} d^{2m} $.}
$$ \liminf_{U\rightarrow \infty}  \frac{M_{2m,1}(U;L/K,\Phi,\eta)} { M_{2,1}(U;L/K,\Phi,\eta)^m} \geq (1+o_{d\rightarrow \infty}(1))\frac{\mu_{2m}d^{m-2}}{( [K:\Q]\log(\rd_L) )^m }, $$
and therefore these moments are non-Gaussian as soon as $[K:\Q]\log(\rd_L) = o(d^{1-\eps})$ for some fixed $\eps>0$. (In both $o()$ terms above, $m$ is fixed.)

\end{theorem}

To obtain an explicit non-Gaussian family,  consider the extension (see Example~\ref{ex:rad}) $K=\Q$, $L=\Q(a^{\frac 1p},\zeta_p)$ where $a,p$ are primes such that $p^2\nmid a^{p-1}-1$  (there is a sequence of such pairs $(a,p)$ with both $a$, $p$ tending to infinity; see~\cite{FJ}*{Lemma 9.9}), one has that $\log(\rd_L) \ll \log(ap)$ and $d=p-1$ (see~\cite{FJ}*{Prop. 9.8} where the table of characters of $\Gal(L/K)\simeq (\Z/p\Z)\rtimes(\Z/p\Z)^\times$ as well as the formula for the discriminant of $L$ are stated). We conclude that $[K:\Q]\log(\rd_L) =\log(\rd_L)= o(d^{1-\eps})$, and thus Theorem~\ref{theorem main 2} implies that we have non-Gaussian moments.


\begin{remark}
All the irreducible representations of a doubly transitive Frobenius group $G$ are monomial (\textit{i.e.} induced from a representation of degree $1$ of a subgroup, see~Corollary~\ref{cor:doubtrans}). In particular Brauer's Theorem implies that AC holds for any Galois extension of number fields whose Galois group is isomorphic to $G$.
\end{remark}

The paper is organized as follows. In~\S\ref{section:prep} we prove Theorem~\ref{theorem main} by first establishing a few preparatory results  about properties of $H_s$ as well as bounds on certain Artin conductors. We devote~\S\ref{section:LargeDegree} to the proof of Theorem~\ref{theorem main 2} which exploits specific properties of doubly transitive Frobenius groups.

\section{Moments of the variance: Proof of Theorem~\ref{theorem main}}
\label{section:prep}
Let us first give a precise bound on the Artin conductor $A({\vartheta})$ (see \textit{e.g.}~\cite{BFJ}*{\S3} for recollections on Artin conductors in this context) of the irreducible character of large degree~$\vartheta$ appearing in Theorem~\ref{theorem main}.

\begin{lemma}
\label{lemma evaluation Artin conductor}
Let $L/K$ be a Galois extension of number fields of group $G$ where $|G| \geq 4$, and assume that
there exists $\vartheta\in\Irr(G)$ such that $ |G|- f(|G|) \leq \vartheta(1)^2 \leq |G|-1 $, where $f$ is a function such that $ 1\leq f(|G|)\leq \frac14 {|G|} $. Then we have the bounds
$$ 1- \frac{f(|G|)^{\frac 12}}{|G|^{\frac 12}} -2\frac{f(|G|)^{\frac 32}}{|G|^{\frac 32}}    \leq  \frac{\log(A({\vartheta}))}{\vartheta(1) [K:\Q]\log(\rd_L)}   \leq  1+ \frac{f(|G|)^{\frac 12}}{|G|^{\frac 12}} +2\frac{f(|G|)^{\frac 32}}{|G|^{\frac 32}}.
$$
 
\end{lemma}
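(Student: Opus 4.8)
The plan is to derive the bounds from the conductor--discriminant formula, isolating the contribution of $\vartheta$ and controlling the remaining characters via a uniform bound on their Artin conductors. Write $a_\chi:=\log(A(\chi))$, and recall that with the analytic normalization $A(\chi)=d_K^{\chi(1)}\mathcal N(\mathfrak f(\chi))$ the conductor--discriminant formula reads $\log d_L=\sum_{\chi\in\Irr(G)}\chi(1)a_\chi$. Since $\rd_L=d_L^{1/[L:\Q]}$ and $[L:\Q]=|G|[K:\Q]$, one has $[K:\Q]\log(\rd_L)=\log(d_L)/|G|$, so the quantity to be estimated is
$$ \frac{\log(A(\vartheta))}{\vartheta(1)[K:\Q]\log(\rd_L)}=\frac{|G|\,a_\vartheta}{\vartheta(1)\log d_L}=\frac{|G|}{\vartheta(1)^2}\Bigl(1-E\Bigr),\qquad E:=\frac{1}{\log d_L}\sum_{\chi\neq\vartheta}\chi(1)a_\chi. $$
Each $a_\chi\geq0$ (as $d_K\geq1$ and $\mathcal N(\mathfrak f(\chi))\geq1$), so $E\in[0,1]$; I set $\eps:=(f(|G|)/|G|)^{1/2}\leq\tfrac12$ throughout.

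For the upper bound I would simply discard the nonnegative sum defining $E$, giving $E\geq0$ and
$$ \frac{\log(A(\vartheta))}{\vartheta(1)[K:\Q]\log(\rd_L)}\leq\frac{|G|}{\vartheta(1)^2}\leq\frac{|G|}{|G|-f(|G|)}=\frac{1}{1-\eps^2}, $$
after which an elementary one-variable inequality shows $(1-\eps^2)^{-1}\leq1+\eps+2\eps^3$ for $0\leq\eps\leq\tfrac12$, yielding the claimed upper bound.

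For the lower bound I need to bound $E$ from above, and the crucial input is a uniform estimate $a_\chi\leq \tfrac{2\chi(1)}{|G|}\log d_L=2\chi(1)[K:\Q]\log(\rd_L)$ valid for every $\chi\in\Irr(G)$. I would prove this prime by prime: writing $n_\p(\chi)$ for the local conductor exponent at a prime $\p$ of $K$ with ramification filtration $(G_i)_{i\geq0}$, one has $n_\p(\chi)=\sum_{i\geq0}\tfrac{|G_i|}{|G_0|}\bigl(\chi(1)-\dim V_\chi^{G_i}\bigr)$, a finite sum since the summand vanishes once $G_i$ is trivial. Using $\dim V_\chi^{G_i}\geq0$ together with $|G_i|\leq2(|G_i|-1)$ for $G_i\neq1$ gives
$$ n_\p(\chi)\leq\chi(1)\sum_{i:\,G_i\neq1}\frac{|G_i|}{|G_0|}\leq\frac{2\chi(1)}{|G|}\cdot\frac{|G|}{|G_0|}\sum_{i\geq0}\bigl(|G_i|-1\bigr)=\frac{2\chi(1)}{|G|}\,n_\p(\mathrm{reg}). $$
Summing against $\log\mathcal N\p$ and recalling $\log d_L=|G|\log d_K+\sum_\p n_\p(\mathrm{reg})\log\mathcal N\p$, one finds $a_\chi\leq\tfrac{2\chi(1)}{|G|}\log d_L-\chi(1)\log d_K\leq\tfrac{2\chi(1)}{|G|}\log d_L$ since $d_K\geq1$. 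Feeding this into $E$ and using $\sum_{\chi\neq\vartheta}\chi(1)^2=|G|-\vartheta(1)^2\leq f(|G|)$ yields $E\leq2\eps^2$, whence
$$ \frac{\log(A(\vartheta))}{\vartheta(1)[K:\Q]\log(\rd_L)}=\frac{|G|}{\vartheta(1)^2}(1-E)\geq1-2\eps^2\geq1-\eps-2\eps^3, $$
the last step being elementary (it amounts to $2\eps^2-2\eps\leq2\eps^3-\eps$, i.e. $2\eps^2-2\eps+1\geq0$). The main obstacle is establishing the uniform conductor bound $a_\chi\ll\chi(1)[K:\Q]\log(\rd_L)$, which requires the local ramification-group analysis above; the remaining estimates are bookkeeping with the conductor--discriminant formula and two elementary inequalities in $\eps$. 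Note that the factor $2$ produced by this argument is harmless, since it still gives bounds strictly sharper than those claimed.
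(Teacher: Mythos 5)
Your proof is correct, but it takes a genuinely different route from the paper's. The paper stays on the character-theoretic side: from $1=\langle\vartheta,\vartheta\rangle_G$ it extracts the pointwise bound $|\vartheta(g)|\leq f(|G|)^{1/2}\leq \vartheta(1)\tfrac{f(|G|)^{1/2}}{|G|^{1/2}}\bigl(1+2\tfrac{f(|G|)}{|G|}\bigr)$ for $g\neq 1$, and then invokes \cite{FJ}*{Lemma 4.2} as a black box comparing $\log A(\vartheta)$ to $\vartheta(1)[K:\Q]\log(\rd_L)$ with relative error governed by $\max_{g\neq 1}|\vartheta(g)|/\vartheta(1)$; that is precisely where the first-order term $f(|G|)^{1/2}/|G|^{1/2}$ in the statement comes from. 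You instead work directly with the conductor--discriminant formula, isolating $\vartheta(1)\log A(\vartheta)$ inside $\log d_L$ and controlling the complementary sum $\sum_{\chi\neq\vartheta}\chi(1)\log A(\chi)$ via the uniform local bound $n_\p(\chi)\leq\tfrac{2\chi(1)}{|G|}n_\p(\mathrm{reg})$, which you derive correctly from the ramification-group expression for the conductor exponent (the inequality $|G_i|\leq 2(|G_i|-1)$ for $G_i\neq 1$ being the only loss). Both arguments ultimately exploit the same arithmetic input $\sum_{\chi\neq\vartheta}\chi(1)^2=|G|-\vartheta(1)^2\leq f(|G|)$, but yours is self-contained and in fact yields the sharper two-sided relative error $O(f(|G|)/|G|)$ in place of $O\bigl((f(|G|)/|G|)^{1/2}\bigr)$; your two elementary inequalities $2\eps^2-2\eps+1>0$ and $(1-\eps^2)^{-1}\leq 1+\eps+2\eps^3$ for $0\leq\eps\leq\tfrac12$ do correctly reduce this to the stated bounds (for the lower bound you implicitly use $1-E\geq 1-2\eps^2>0$ before multiplying by $|G|/\vartheta(1)^2\geq 1$, which is legitimate since $f(|G|)\leq\tfrac14|G|$). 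The one thing to note is that the paper's route generalizes to situations where one only controls $\vartheta$ on conjugacy classes rather than the full conductor bookkeeping, whereas your route buys independence from \cite{FJ}*{Lemma 4.2} and a better constant.
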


\begin{proof}
Since $$ 1= \langle \vartheta,\vartheta\rangle_G = \frac{\vartheta(1)^2}{|G|}+ \frac 1{|G|} \sum_{1\neq g\in G} \vartheta(g) \overline{\vartheta(g)}, $$
it follows that for all $g\neq 1$, 
$$  |\vartheta(g)| \leq f(|G|)^{\frac 12} \leq     \vartheta(1)\frac{f(|G|)^{\frac 12}}{|G|^{\frac 12}} \Big( 1+2\frac{f(|G|)}{|G|}\Big).$$
As a result,~\cite{FJ}*{Lemma 4.2} implies the claimed bounds.
\end{proof}

\begin{remark}
Writing $d=\vartheta(1)$, we have $|G|=d(d+e)$ for some $e\geq 1$ (see the beginning of \S\ref{section:LargeDegree}). As a result, in Lemma~\ref{lemma evaluation Artin conductor} one necessarily has $f(|G|) \geq de\geq |G|^{\frac 12}-\frac 12$. 
\end{remark}
 We will need the following combinatorial result.
\begin{lemma}
\label{lemma combinatorics}
Fix $s\in \mathbb N$ and let ${\mathcal H}_s$ be the set of involutions $ \pi: \{1,\dots,s\}\times \{1,2\} \rightarrow \{1,\dots,s\}\times \{1,2\} $ having no fixed point and such that $\pi(j,1) \neq (j,2)$ for all $j\leq s$. Then we have 
$
  | \mathcal H_s|=H_s
  $,
where $H_s$ is defined by~\eqref{def Hs}.
\end{lemma}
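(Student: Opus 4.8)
The plan is to recognize $\mathcal{H}_s$ as a set of perfect matchings on $2s$ points subject to $s$ forbidden edges, and then to count it by inclusion–exclusion.

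First I would observe that a fixed-point-free involution $\pi$ of $X:=\{1,\dots,s\}\times\{1,2\}$ is the same datum as a perfect matching of the $2s$-element set $X$: the blocks are the pairs $\{a,\pi(a)\}$. Under this dictionary the constraint $\pi(j,1)\neq(j,2)$ (equivalently, since $\pi$ is an involution, $\pi(j,2)\neq(j,1)$) says precisely that the ``vertical'' edge $e_j:=\{(j,1),(j,2)\}$ is not a block of the matching. Thus $|\mathcal{H}_s|$ is the number of perfect matchings of $X$ using none of the $s$ forbidden edges $e_1,\dots,e_s$. Since the total number of perfect matchings of a set of size $2m$ is $(2m-1)!!=\mu_{2m}$, dropping all constraints gives $\mu_{2s}$ matchings.

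Next I would set up inclusion–exclusion over the forbidden edges. For $T\subseteq\{1,\dots,s\}$ let $A_T$ be the set of perfect matchings of $X$ that contain every edge $e_j$ with $j\in T$ (with no condition imposed on the remaining forbidden edges). Fixing those $|T|$ blocks removes $2|T|$ points, and a matching in $A_T$ is then an arbitrary perfect matching of the remaining $2(s-|T|)$ points; hence $|A_T|=\mu_{2(s-|T|)}$, depending on $T$ only through its cardinality. Inclusion–exclusion then yields
$$|\mathcal{H}_s| = \sum_{T\subseteq\{1,\dots,s\}}(-1)^{|T|}|A_T| = \sum_{k=0}^{s}\binom{s}{k}(-1)^{k}\mu_{2(s-k)}.$$
Substituting $j=s-k$ and using $\binom{s}{s-j}=\binom{s}{j}$ turns the right-hand side into $\sum_{j=0}^{s}\binom{s}{j}(-1)^{s-j}\mu_{2j}$, which is exactly $H_s$ by~\eqref{def Hs}.

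The argument is essentially routine once the combinatorial translation is in place; the only point needing care is the bookkeeping in the inclusion–exclusion. I expect no genuine obstacle, but the one mild subtlety is to define $A_T$ by ``contains $e_j$ for all $j\in T$'' rather than ``contains exactly the forbidden edges indexed by $T$,'' so that the classical inclusion–exclusion formula applies verbatim and the count $|A_T|=\mu_{2(s-|T|)}$ is correct.
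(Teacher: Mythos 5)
Your proof is correct, and it is essentially the inclusion--exclusion dual of the paper's argument. The paper partitions the full set of $\mu_{2s}$ fixed-point-free involutions according to the number $k$ of indices $j$ with $\pi(j,1)=(j,2)$, obtaining the convolution identity $\sum_{k=0}^{s}\binom{s}{k}|\mathcal H_{s-k}|=\mu_{2s}$, and then deduces $|\mathcal H_s|=H_s$ by (an implicit) binomial inversion. You instead sieve directly over the forbidden vertical edges: with $|A_T|=\mu_{2(s-|T|)}$, inclusion--exclusion gives the alternating sum at once, which after the substitution $j=s-|T|$ is exactly~\eqref{def Hs}. Both arguments rest on the same two facts --- the dictionary between fixed-point-free involutions and perfect matchings, and the count $\mu_{2m}$ of perfect matchings on $2m$ points --- and inclusion--exclusion is precisely the inversion of the paper's partition identity, so the two proofs carry the same content. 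Your version has the small expository advantage of producing the signed formula directly rather than leaving the inversion step to the reader; the paper's version avoids invoking inclusion--exclusion and instead records the unsigned identity $\sum_k\binom{s}{k}|\mathcal H_{s-k}|=\mu_{2s}$, which is of some independent interest. Your care in defining $A_T$ by ``contains $e_j$ for all $j\in T$'' rather than ``exactly those'' is exactly the right bookkeeping point.
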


\begin{remark} As $s\rightarrow \infty,$ we have the asymptotic formula
$$H_{s}=(\e^{-1/2}+o(1))\mu_{2s}.$$
 \end{remark}

\begin{proof}[Proof of Lemma~\ref{lemma combinatorics}]
We first prove the relation
\begin{equation}\label{sumF=mu}
\sum_{k=0}^s \binom{s}{k}|\mathcal H_{s-k}|=\mu_{2s},
\end{equation}
from which the claimed identity follows. Consider the set of involutions $ \pi: \{1,\dots,s\}\times \{1,2\} \rightarrow \{1,\dots,s\}\times \{1,2\} $ having no fixed points; the total number of such involutions is $\mu_{2s}$. This set can be partitioned according to the number $k$ of $1\leq j\leq s$  such that $\pi(j,1) =  (j,2)$. The number of elements in each of these subsets is equal to $\binom{s}{k}|\mathcal H_{s-k}| $, thus we obtain the claimed identity. The proof is finished.
\end{proof}
 
The next step will be to apply the explicit formula and bound $\V_{2,s}(U;L/K,\Phi,\eta)$ in terms of the quantity
\begin{equation}
\label{def mathbb V 2s}
\mathbb V_{2,s}(L/K;\eta ):=
\sum_{\chi_1,\dots,\chi_s \in \Irr(G)}  \sum_{ \substack{\gamma_{\chi_1} ,\gamma_{\chi_1}' ,\dots ,\gamma_{\chi_s} ,\gamma_{\chi_s}' \neq 0\\ \gamma_{\chi_j}\neq \gamma_{\chi_j}' \\ \sum_{j=1}^s(\gamma_{\chi_j}-\gamma_{\chi_j}')=0 }} \prod_{j=1}^s\Big( \widehat \eta\Big( \frac {\gamma_{\chi_j}}{2\pi } \Big)\widehat \eta\Big( \frac {\gamma_{\chi_j}'}{2\pi } \Big) \Big),
\end{equation}
where $\gamma_{\chi_j}$ and $\gamma_{\chi_j}'$ run through the imaginary parts of the non-trivial zeros of $L(s,L/K,\chi_j)$.

\begin{lemma}
\label{lemma first bound V}
Fix $s\in \mathbb N$ and let $L/K$ be a Galois extension and assume AC and GRH. Let $\eta\in \mathcal S_\delta,\Phi\in \mathcal U$. 
Then for $U\geq 1$ we have the estimate 
$$
\V_{2,s}(U;L/K,\Phi,\eta)\geq 
\mathbb V_{2,s}(L/K;\eta ) 
+O_{ \Phi}\Big( \frac{(\kappa_\eta[K:\Q]^2 (\log (\rd_L+2))^2 
\lambda_1(G)^2)^{s} }U\Big),$$
where $\kappa_\eta>0$ is a constant which depends on $\eta$.
\end{lemma}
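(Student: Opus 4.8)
The plan is to start from the definition of $\V_{2,s}$ in~\eqref{eq:s-mom-var} and expand the $s$-th power of $\M_2(u;L/K,\eta)-m_2(L/K;\eta)$, substituting the Parseval expression~\eqref{eq:parseval} for $\M_2$. First I would apply the explicit formula to each factor $\psi_\eta(\e^u;L/K,\chi)-\e^{u/2}\mathcal L_\eta(\tfrac12)-\widehat\eta(0)\ord_{s=\frac12}L(s,L/K,\chi)$, writing it (under GRH) as a sum over the non-trivial zeros $\rho=\tfrac12+i\gamma_\chi$ of the corresponding Artin $L$-function, essentially of the shape $-\sum_{\gamma_\chi}\widehat\eta(\tfrac{\gamma_\chi}{2\pi})\e^{iu\gamma_\chi}$ plus controlled error terms coming from the prime powers, the archimedean factors, and (via AC) the holomorphy of $L(s,L/K,\chi)$. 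The subtraction of the mean $m_2$ has the effect of removing the diagonal ``zero-free'' main term, so that after squaring each $\chi$-term one is left with oscillatory sums $\sum_{\gamma_\chi\neq\gamma_\chi'}\widehat\eta(\tfrac{\gamma_\chi}{2\pi})\widehat\eta(\tfrac{\gamma_\chi'}{2\pi})\e^{iu(\gamma_\chi-\gamma_\chi')}$ over pairs of distinct zeros.

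Next I would carry out the integration against the weight $\Phi(u/U)$ over $u\in[0,\infty)$. The key mechanism is that $\tfrac{1}{U\int_0^\infty\Phi}\int_0^\infty\Phi(u/U)\e^{iu\beta}\,\d u$ is, up to normalization, $\widehat\Phi(-\beta U/(2\pi))/\int_0^\infty\Phi$, which concentrates sharply near $\beta=0$ as $U\to\infty$. Applying this to the product of $s$ squared $\chi$-terms, each factor contributes a frequency $\beta=\gamma_{\chi_j}-\gamma_{\chi_j}'$, and the total phase is $\sum_{j=1}^s(\gamma_{\chi_j}-\gamma_{\chi_j}')$. Because $\Phi,\widehat\Phi\ge0$ (this is precisely why $\Phi\in\mathcal U$ is assumed), every contribution to the resulting sum is nonnegative; this positivity is exactly what converts the genuine asymptotic evaluation into the clean \emph{lower} bound asserted in the lemma. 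The ``diagonal'' contribution, where the total frequency vanishes identically because the multiset of $\gamma_{\chi_j}'$ matches that of $\gamma_{\chi_j}$, reproduces $\mathbb V_{2,s}(L/K;\eta)$ as defined in~\eqref{def mathbb V 2s}; one keeps only the constraint $\sum_j(\gamma_{\chi_j}-\gamma_{\chi_j}')=0$ and discards (by positivity) the off-diagonal terms, which only increase the sum.

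The main obstacle, and where the error term $O(U^{-1}(\kappa_\eta[K:\Q]^2(\log(\rd_L+2))^2\lambda_1(G)^2)^s)$ is born, is controlling the off-diagonal frequencies $\beta=\sum_j(\gamma_{\chi_j}-\gamma_{\chi_j}')\neq 0$. For these, $\widehat\Phi(\beta U/(2\pi))$ decays in $U$, but one must sum over all $s$-tuples of characters and all configurations of zeros, bounding the total mass. The natural tool is a zero-counting estimate: the number of zeros $\gamma_\chi$ with $|\gamma_\chi|\le T$ is $O(\chi(1)[K:\Q](\log(\rd_L+2)+\log(T+2)))$, and the rapid decay of $\widehat\eta$ assumed in $\mathcal S_\delta$ makes $\sum_{\gamma_\chi}|\widehat\eta(\tfrac{\gamma_\chi}{2\pi})|$ convergent and bounded by $O_\eta(\chi(1)[K:\Q]\log(\rd_L+2))$. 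Summing such a bound over $\chi\in\Irr(G)$ produces a factor $\lambda_1(G)[K:\Q]\log(\rd_L+2)$ per ``slot'', hence the power $2s$ in the exponents and the factor $\lambda_1(G)^2$, while the genuinely off-diagonal terms gain the decay factor $U^{-1}$ from the spreading of $\widehat\Phi$.

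Concretely I would organize the argument in three steps: (i) an explicit-formula lemma rewriting each centered $\chi$-term as a zero-sum plus an error, with the prime-power and archimedean contributions absorbed into the polynomial-in-$\log\rd_L$ factors; (ii) the $u$-integration producing $\widehat\Phi$ of the total frequency, where positivity of $\Phi$ and $\widehat\Phi$ is invoked to drop the diagonal-constraint to an inequality and to discard non-matching zero configurations; and (iii) the off-diagonal estimate via zero-counting and the decay of $\widehat\eta$, yielding the stated error. The delicate point throughout is that AC is needed to guarantee that each $L(s,L/K,\chi)$ is entire (so that the explicit formula has no spurious poles and the order of vanishing at $s=\tfrac12$ is exactly the multiplicity of the central zero), and GRH places all zeros on the critical line so that the phases $\e^{iu\gamma}$ are genuinely oscillatory rather than growing; without both, neither the main term nor the positivity structure survives.
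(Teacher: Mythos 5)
Your overall architecture is the same as the paper's: apply the explicit formula to each centered $\chi$-term, use orthogonality/Parseval to write $\M_2(u;L/K,\eta)$ as a double sum over zeros, identify $m_2(L/K;\eta)$ with the diagonal $\gamma_\chi=\gamma_\chi'$ contribution so that the centered quantity is a sum over pairs of distinct zeros, expand the $s$-th power, integrate against $\Phi(u/U)$ to produce $\widehat\Phi$ of the total frequency, and use positivity of $\widehat\Phi$ and $\widehat\eta$ to retain only the configurations with $\sum_{j}(\gamma_{\chi_j}-\gamma_{\chi_j}')=0$, which is exactly $\mathbb V_{2,s}(L/K;\eta)$. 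Up to that point the proposal is sound and matches the paper.

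The gap is in your step (iii), i.e.\ in where the error term $O\big(U^{-1}(\kappa_\eta[K:\Q]^2(\log(\rd_L+2))^2\lambda_1(G)^2)^s\big)$ comes from. You assert it is ``born'' from the off-diagonal frequencies $\beta=\sum_j(\gamma_{\chi_j}-\gamma_{\chi_j}')\neq 0$ and propose to bound their total mass by zero-counting together with the decay of $\widehat\Phi$. This is both unnecessary and unworkable as described. Unnecessary, because for a \emph{lower} bound these terms are nonnegative (since $\widehat\Phi\geq 0$ and $\widehat\eta\geq 0$) and are simply discarded -- which you yourself invoke in step (ii), so step (iii) is internally inconsistent with it. Unworkable, because $\beta$ ranges over sums of differences of imaginary parts of zeros of the various Artin $L$-functions with no lower bound on $|\beta|$ away from $0$: for configurations with $0<|\beta|\ll 1/U$ the factor $\widehat\Phi(U\beta/(2\pi))$ is not small, so no uniform $O(1/U)$ saving can be extracted from the off-diagonal terms by this route. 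The actual source of the $1/U$ in the paper is entirely different: it is the remainder $O\big(\e^{-u/2}[K:\Q]\lambda_{1,1}(t_C)\log(\rd_L+2)\big)$ in the explicit formula. When the $s$-th power is expanded, every cross term contains at least one such exponentially decaying factor; the zero-sum factors are bounded via the Riemann--von Mangoldt formula by $\ll [K:\Q]\lambda_{1,1}(t_C)\log(\rd_L+2)$ (and $\sum_C|C|\lambda_{1,1}(t_C)^2\leq|G|\lambda_1(G)^2$ gives the $\lambda_1(G)^2$), and the integration $\tfrac1U\int_0^\infty\Phi(u/U)\e^{-u/2}\,\d u$ supplies the $U^{-1}$. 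Your write-up acknowledges the explicit-formula remainder in step (i) but never propagates it through the $s$-fold expansion, so the stated error term is not actually derived.
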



\begin{proof}

Recall the notation $t_C=\tfrac{|G|}{|C|}\mathds{1}_C$.
By~\cite{BFJ}*{Lemma 4.1}, we have that
\[ \psi_\eta^*(\e^u;L/K,t_C)=-\sum_{\chi \in \Irr(G)} \overline{\chi}(C)\sum_{\gamma_\chi\neq  0} \e^{i \gamma_\chi u} \widehat \eta\Big( \frac {\gamma_\chi}{2\pi } \Big)+O_{\eta}\Big( \e^{-\frac u2}  [K:\Q] \lambda_{1,1}(t_C) \log(\rd_L+2) \Big),
\]
where $\psi_\eta^*$ is defined by~\eqref{eq:psi*}, $\gamma_\chi$ runs over the imaginary parts of the non real zeros of $L(s,L/K,\chi)$ and where $\lambda_{1,1}(t_.)\colon G^\sharp\to\R$ is defined by
$
\lambda_{1,1}(t_C)=\sum_{\chi \in \Irr(G)}\chi(1)|\widehat{t_C}(\chi)|$.

Note that the main term here satisfies the bound
$$\sum_{\chi \in \Irr(G)} \overline{\chi}(C)\sum_{\gamma_\chi\neq  0} \e^{i \gamma_\chi u} \widehat \eta\Big( \frac {\gamma_\chi}{2\pi } \Big) \ll_{\eta}  [K:\Q] \lambda_{1,1}(t_C) \log(\rd_L+2),$$
by the Riemann--von Mangoldt formula~\cite{IK}*{Theorem 5.8}. By the orthogonality of irreducible characters, we deduce that
\begin{multline*}
\M_2(u;L/K,\eta) =  \sum_{\chi \in \Irr(G)} \sum_{ \substack{\gamma_{\chi} , \gamma_{\chi}' { \neq 0}}}\e^{i (\gamma_{\chi}-\gamma_{\chi}') u} \widehat \eta\Big( \frac {\gamma_{\chi}}{2\pi } \Big) \widehat \eta\Big( \frac {\gamma_{\chi}'}{2\pi } \Big) \\+O_{ \eta}\Big(\e^{-\frac u2}\frac{[K:\Q]^2  (\log(\rd_L+2))^2 }{ |G|}\sum_{C\in G^\sharp} |C|\lambda_{1,1}(t_C)^2 \Big),
\end{multline*} 
which implies in particular that
(recall~\eqref{eq:m2})
$$ m_2(L/K;\eta) =  \sum_{\chi \in \Irr(G)} b_0^+(\chi;|\widehat\eta|^2),$$
 with
 \begin{equation}
  \label{def b+}
    b_0^+(\chi;h):=\sum_{\rho_\chi\notin \mathbb R} {\rm ord}_{s= \rho_\chi} L(s,L/K,\chi) h\Big(\frac{\rho_\chi-\frac 12}{2\pi i }\Big).
  \end{equation}
 

Note also that (recalling~\eqref{eq:lambdaj})
\[ 
\sum_{C\in G^\sharp} |C|\lambda_{1,1}(t_C)^2=\frac 1{|G|}\sum_{C\in G^\sharp} |C| \Big(  \sum_{\chi \in \Irr(G)} \chi(1)|\chi(C)|\Big)^2 \leq \Big( \sum_{\chi \in \Irr(G)} \chi(1)\Big)^2=\lambda_1(G)^2\,.
\]
We then have the following bound for the main term in our estimate for $\M_2(u;L/K,\eta) $:
\begin{align*}
 \sum_{\chi \in \Irr(G)} \sum_{ \substack{\gamma_{\chi} , \gamma_{\chi}'\neq 0   }}\e^{i (\gamma_{\chi}-\gamma_{\chi}') u}\widehat \eta\Big( \frac {\gamma_{\chi}}{2\pi } \Big)\widehat \eta\Big( \frac {\gamma_{\chi}'}{2\pi } \Big)
 &\ll_{\eta} \sum_{\chi \in \Irr(G)} (\log A(\chi))^2 
 \ll [K:\Q]^2 (\log (\rd_L+2))^2 |G|,
\end{align*} where we have used~\cite{BFJ}*{Lemma 4.2}  as well as~\cite{FJ}*{Lemma 4.1}. 
Expanding the $s$-th power, we deduce that
 \begin{align*}   
 \V_{2,s}&(U;L/K,\Phi,\eta) \cr
 &=  \frac 2{U\widehat \Phi(0)}\int_0^\infty \Phi(\tfrac uU)\Big(    \sum_{\chi \in \Irr(G)} \sum_{ \substack{\gamma_{\chi} , \gamma_{\chi}' \neq 0 \\ \gamma_{\chi} \neq  \gamma_{\chi}'  }}\e^{i (\gamma_{\chi}-\gamma_{\chi}') u}\widehat \eta\Big( \frac {\gamma_{\chi}}{2\pi } \Big)\widehat \eta\Big( \frac {\gamma_{\chi}'}{2\pi } \Big) \\
 &\hspace{2cm} +O_{ \eta}\Big(\e^{-\frac u2}  \big( [K:\Q]   \log (\rd_L+2) 
\lambda_1(G)^2\big)^{2}   \Big)\Big)^s \d u, \cr
 &
=\frac { 1}{\widehat \Phi(0)}\sum_{\chi_1,\dots,\chi_s \in \Irr(G)}  \sum_{ \substack{\gamma_{\chi_1} ,\gamma_{\chi_1}' ,\dots ,\gamma_{\chi_s} ,\gamma_{\chi_s}' { \neq 0 } \\ \gamma_{\chi_j}\neq \gamma_{\chi_j}'  }} \widehat \Phi\Big( \frac{U \sum_{j=1}^s(\gamma_{\chi_j}-\gamma_{\chi_j}')}{2\pi} \Big) \prod_{j=1}^s\Big( \widehat \eta\Big( \frac {\gamma_{\chi_j}}{2\pi } \Big)\widehat \eta\Big( \frac {\gamma_{\chi_j}'}{2\pi } \Big) \Big) 
\cr &\hspace{2cm}
+O_{\Phi}\Big( \frac{(\kappa_\eta[K:\Q]   \log (\rd_L+2)  
\lambda_1(G)^2)^{2s}  }U\Big).
 \end{align*}

The claimed bound follows since by positivity of $\widehat \Phi$ and $\widehat \eta$ we may remove any number of terms in the summation.
\end{proof}


We now give a lower bound for  $\mathbb V_{2,s}(L/K;\eta )$  by noting that we may discard any number of terms in its defining sum, since each summand is non-negative.

\begin{lemma}\label{lemme V2s ineg}
Let $L/K$ be a Galois extension and assume AC and GRH. Let $\eta\in \mathcal S_\delta,\Phi\in \mathcal U$. Assume moreover that there exists a (necessarily real) character $\vartheta\in\Irr(G)$ such that $ |G|-f(|G|) \leq \vartheta(1)^2 \leq |G|-1 $ where $f$ is a function such that $1\leq f(|G|)\leq \frac14{|G|}$. Then for $s\in \mathbb N$ we have the bound
 $$\mathbb V_{2,s}(L/K;\eta) \geq H_s \big(\alpha(|\widehat \eta|^2)[K:\Q] |G|^{\frac 12}\log(\rd_L) \big)^s\Big( 1+O\Big(\frac{sf(G)^{\frac 12}}{|G|^{\frac 12}} + \frac{\kappa_\eta^s}{ \log\log(\rd_L+2)}\Big)\Big), $$
 where $H_s$ is defined by~\eqref{def Hs}.
\end{lemma}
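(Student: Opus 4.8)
The plan is to keep only the contribution of the dominant character $\vartheta$ and then to isolate the ``diagonal'' configurations, whose count is exactly the involution number $H_s=|\mathcal H_s|$ of Lemma~\ref{lemma combinatorics}. Since $\widehat\eta\geq0$ by the definition of $\mathcal S_\delta$, every summand in~\eqref{def mathbb V 2s} is nonnegative, so I may discard all tuples except $\chi_1=\dots=\chi_s=\vartheta$ to get
\[
\mathbb V_{2,s}(L/K;\eta)\geq\sum_{\substack{\gamma_1,\gamma_1',\dots,\gamma_s,\gamma_s'\neq0\\ \gamma_j\neq\gamma_j'\\ \sum_{j=1}^s(\gamma_j-\gamma_j')=0}}\prod_{j=1}^s\widehat\eta\Big(\frac{\gamma_j}{2\pi}\Big)\widehat\eta\Big(\frac{\gamma_j'}{2\pi}\Big)=:\mathbb V_{2,s}^{\vartheta},
\]
where now all $\gamma_j,\gamma_j'$ run over imaginary parts of zeros of $L(s,L/K,\vartheta)$ (these are real by GRH, and the $L$-function is entire by AC).

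Next I would read off which tuples make the constraint $\sum_j(\gamma_j-\gamma_j')=0$ hold identically. Label the $2s$ variables by slots $(j,1)\leftrightarrow\gamma_j$ (sign $+$) and $(j,2)\leftrightarrow\gamma_j'$ (sign $-$), and to each involution $\pi\in\mathcal H_s$ associate the tuples in which the two slots paired by $\pi$ carry equal values when their signs are opposite, and opposite values when their signs agree. The latter is legitimate because $\vartheta$ is real, so by the functional equation $-\gamma$ is a zero whenever $\gamma$ is; in both cases the paired terms cancel in $\sum_j(\gamma_j-\gamma_j')$, and since $\widehat\eta$ is even the associated factor of the summand equals $\widehat\eta(\gamma/2\pi)^2$. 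Summing one free zero per pair contributes $S:=\sum_{\gamma\neq0}\widehat\eta(\gamma/2\pi)^2$, so each $\pi$ accounts for $S^s$. The matchings with $\pi(j,1)=(j,2)$ are exactly those forcing $\gamma_j=\gamma_j'$, which explains why $\mathcal H_s$ and not the full set of fixed-point-free involutions occurs. Because one tuple can respect several matchings, I would convert this into an honest lower bound through the Bonferroni inequality (valid since $w(c):=\prod_j\widehat\eta(\gamma_j/2\pi)\widehat\eta(\gamma_j'/2\pi)\geq0$), writing $B_\pi$ for the set of valid tuples respecting $\pi$:
\[
\mathbb V_{2,s}^{\vartheta}\geq\sum_{c\in\bigcup_\pi B_\pi}w(c)\geq|\mathcal H_s|\,S^s-\sum_{\pi\neq\pi'}\sum_{c\in B_\pi\cap B_{\pi'}}w(c),
\]
with $|\mathcal H_s|=H_s$ by Lemma~\ref{lemma combinatorics}.

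The third step evaluates the single-pair sum $S$ by a Riemann--von Mangoldt zero-density estimate for $L(s,L/K,\vartheta)$ (as in~\cite{IK}*{Theorem 5.8} together with the conductor bound~\cite{BFJ}*{Lemma 4.2}): the density of zeros near height $t$ is $\tfrac1{2\pi}\big(\log A(\vartheta)+\vartheta(1)[K:\Q]\log(|t|+2)\big)$, so that $S=\alpha(|\widehat\eta|^2)\log A(\vartheta)\big(1+O(1/\log(\rd_L+2))\big)$, the secondary (archimedean) term being smaller thanks to the decay of $\widehat\eta$. Feeding in Lemma~\ref{lemma evaluation Artin conductor}, namely $\log A(\vartheta)=\vartheta(1)[K:\Q]\log(\rd_L)\big(1+O(f(|G|)^{1/2}/|G|^{1/2})\big)$, together with $\vartheta(1)=|G|^{1/2}\big(1+O(f(|G|)/|G|)\big)$, gives $S=\alpha(|\widehat\eta|^2)[K:\Q]|G|^{1/2}\log(\rd_L)\big(1+O(f(|G|)^{1/2}/|G|^{1/2})\big)$; raising to the $s$-th power yields the factor $1+O(sf(|G|)^{1/2}/|G|^{1/2})$ of the statement.

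The crux is to show that the Bonferroni correction is of lower order. I would analyse $B_\pi\cap B_{\pi'}$ through the cycle decomposition of the $2$-regular graph $\pi\cup\pi'$: a cycle of length $2\ell>2$ forces $2\ell$ slots to share a single zero up to sign, collapsing $\ell$ free summations into one and replacing $\ell$ factors $S$ by one sum $\sum_{\gamma\neq0}\widehat\eta(\gamma/2\pi)^{2\ell}\asymp\log A(\vartheta)$ (again by the density estimate) instead of $(\log A(\vartheta))^\ell$. Hence the dominant correction comes from a single $4$-cycle and is smaller than $S^s$ by a factor $\asymp1/\log A(\vartheta)$; the same estimate controls the invalid tuples with a coincidental $\gamma_j=\gamma_j'$. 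Since the number of pairs $(\pi,\pi')$ differing by one such cycle is $O(s^2|\mathcal H_s|)$, the total relative error is $O(s^2/\log\log(\rd_L+2))$---a clean bound uniform in the parameters, the genuine saving being the larger $1/\log A(\vartheta)$. Combining this with the previous paragraph produces the claimed inequality. The delicate points are keeping every estimate uniform in $s$ and making sure that each simplification only removes nonnegative terms, so that all inequalities consistently point toward a lower bound.
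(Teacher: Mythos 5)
Your overall strategy coincides with the paper's: restrict to the dominant character $\vartheta$, identify the admissible cancellation patterns with the involutions in $\mathcal H_s$ so that Lemma~\ref{lemma combinatorics} produces the factor $H_s$, and evaluate the single-pair sum via zero counting together with Lemma~\ref{lemma evaluation Artin conductor}. The genuine gap is in your treatment of the correction terms, and it comes from the multiplicities of the zeros, which you never address. In the sums defining $\mathbb V_{2,s}$ the ordinates run over zeros counted \emph{with multiplicity}. Hence, for a pair $(\pi,\pi')$ whose union contains a $4$-cycle, the four slots of that cycle are forced onto a single ordinate and the contribution to $\sum_{c\in B_\pi\cap B_{\pi'}}w(c)$ has the shape $\sum_{\gamma}m(\gamma)^4\,\widehat\eta(\gamma/2\pi)^4$ (the same happens for the ``invalid'' tuples with a coincidental $\gamma_j=\gamma_j'$). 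This is not $\asymp\log A(\vartheta)$: under GRH and AC the only available control on multiplicities is the Carneiro--Chandee--Milinovich bound $m(\rho)\ll\log A(\vartheta)/\log\log A(\vartheta)^{3/\vartheta(1)}$ that the paper invokes, and with it $\sum_\gamma m^4\widehat\eta^4$ can only be bounded by roughly $(\log A)^4/(\log\log A)^3$, which dwarfs the $(\log A)^2$ coming from two independent pairs. So the Bonferroni correction cannot be shown to be of lower order by the route you describe; your claimed saving of $1/\log A(\vartheta)$ is not attainable, and the $s^2/\log\log(\rd_L+2)$ in the statement is precisely the price of the multiplicity bound, not a generous weakening of $1/\log A$.

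The paper sidesteps this by making the sets indexed by $\pi$ \emph{disjoint}: it restricts to tuples whose only cancellation relations are those dictated by $\pi$ (the sets $\Gamma_\pi$), so no inclusion--exclusion over pairs $(\pi,\pi')$ is needed at all. Multiplicities then enter only with first powers: once when completing a sum over pairwise distinct ordinates to $b_0(\vartheta;|\widehat\eta|^2)^s$ (a correction of size $s^2 b_0^{s-2}b_0^+(\vartheta;|\widehat\eta|^4)$), and once when passing between zero counts with and without multiplicity so that the explicit-formula evaluation of $b_0^+$ applies; both are controlled by CCM and give the stated $O(s^2/\log\log(\rd_L+2))$. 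To repair your argument you would either need to define the $B_\pi$ over pairwise distinct ordinates with an exclusive matching condition --- which essentially reproduces the paper's $\Gamma_\pi$ --- or assume simplicity of the zeros, which is not available.
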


\begin{proof}[Proof of Lemma~\ref{lemme V2s ineg}]
Firstly, for $s=1$ the claimed bound is trivial since $\mathbb V_{2,s}(L/K;\eta) =0$.
For $s\geq 2$, restricting the sum in~\eqref{def mathbb V 2s} to the character $\vartheta$, we have the lower bound 
\begin{equation}
    \mathbb V_{2,s}(L/K;\eta) \geq V_s(\vartheta;\eta):= \sum_{ \substack{\gamma_{1,1} ,\gamma_{1,2} ,\dots ,\gamma_{s,1} ,\gamma_{s,2} \neq 0\\ \gamma_{j,1}\neq \gamma_{j,2} \\ \sum_{j=1}^s(\gamma_{j,1}-\gamma_{j,2})=0 \\\forall i\leq s  \forall k \in \{1,2\}\exists! j \neq i: \gamma_{i,k} \in \{ -\gamma_{j,1}, \gamma_{j,2}\} }} \prod_{j=1}^s\Big( \widehat \eta\Big( \frac {\gamma_{j,1}}{2\pi } \Big)\widehat \eta\Big( \frac {\gamma_{j,2}}{2\pi } \Big) \Big), 
    \label{equation lemma wedding}
\end{equation} 
where the $\gamma_{j,k}$ run through the imaginary parts of the non-trivial zeros of $L(s,L/K,\vartheta)$. 
We define $\mathcal H_s$ to be the set of involutions $ \pi: \{1,\dots,s\}\times \{1,2\} \rightarrow \{1,\dots,s\}\times \{1,2\} $ having no fixed point and such that $\pi(j,1) \neq (j,2)$ for all $j\leq s$. We have $H_s=|\mathcal H_s|$. Given $\pi \in \mathcal H_s,$ we define
$$ \Gamma_\pi :=\left\{(\gamma_{1,1},\gamma_{1,2},\dots , \gamma_{s,1},\gamma_{s,2}) \colon\!\!\!
\begin{array}{ll} & \forall (j,k), \gamma_{j,k}\neq 0, \hfill
\\ &\forall (j,k),(i,\ell),
(\gamma_{j,k}+(-1)^{\ell-k}\gamma_{i,\ell}=0)\Rightarrow  (i,\ell)=\pi(j,k)
\end{array}\right\} .   $$

Now, given $\pi\in \mathcal H_s$, 
we note that the vector $(\gamma_{1,1},\gamma_{1,2},\dots , \gamma_{s,1},\gamma_{s,2}) \in \Gamma_\pi$  where for all $(j,k)$, $\gamma_{j,k}=\eps_{j,k}\gamma_{\pi(j,k)}$ and where by definition $\eps_{j,k}=-1$ if $\pi(j,k)_2 =k $ and $\eps_{j,k}=1$ otherwise, corresponds to a summand on the right hand side of~\eqref{equation lemma wedding}. Note that if $\pi_1,\pi_2$ are distinct involutions, then the corresponding vectors of zeros are distinct. We deduce that 
$$
V_s(\vartheta;\eta )\geq \sum_{\pi \in H_s} \sum_{ \substack{(\gamma_{1,1} ,\gamma_{1,2} ,\dots ,\gamma_{s,1} ,\gamma_{s,2}) \in \Gamma_\pi  \\  \forall j,k, \gamma_{j,k}=\eps_{j,k}\gamma_{\pi(j,k)} }} \prod_{j=1}^s\Big( \widehat \eta\Big( \frac {\gamma_{j,1}}{2\pi } \Big)\widehat \eta\Big( \frac {\gamma_{j,2}}{2\pi } \Big) \Big). $$
Reindexing, we see that the innermost sum is 
\begin{align*}
&\geq \sum_{\gamma_1} \sum_{\gamma_2\notin\{\gamma_1,-\gamma_1\} } \ldots
\sum_{\gamma_s\notin\{\gamma_1,-\gamma_1,\ldots ,\gamma_{s-1},-\gamma_{s-1} \} }
\prod_{j=1}^s\Big(m({\tfrac 12+i}\gamma_j)\Big| \widehat \eta\Big( \frac {\gamma_{j}}{2\pi } \Big)\Big|^2\Big)\\ 
&\geq \sum_{\gamma_1} \sum_{\gamma_2\notin\{\gamma_1,-\gamma_1\} } \ldots
\sum_{\gamma_s\notin\{\gamma_1,-\gamma_1,\ldots ,\gamma_{s-1},-\gamma_{s-1} \} }
\prod_{j=1}^s \Big| \widehat \eta\Big( \frac {\gamma_{j}}{2\pi } \Big)\Big|^2 \\ 
&\geq
b_0(\vartheta; |\widehat\eta|^2)^{s-2}\big(b_0(\vartheta; |\widehat\eta|^2)^{2}+O_{ \eta}(s^2 b_0^+(\vartheta; |\widehat\eta|^4))\big),
\end{align*}
where the $\gamma_j$ run through the imaginary parts of the non-trivial zeros of $L(s,L/K,\vartheta)$ and $m(\tfrac 12+i\gamma_j)$ denotes the multiplicity of the corresponding zero, and where 
(recall~\eqref{def b+})
\begin{equation*}
  b_0(\chi;|\widehat \eta|^2):=\sum_{\rho_\chi\notin \mathbb R} \Big| \widehat \eta\Big(\frac{\rho_\chi-\frac 12}{2\pi i }\Big)\Big|^2,\qquad b_0^+(\chi;|\widehat \eta|^4)=\sum_{\rho_\chi\notin \mathbb R} m(\rho_\chi) \Big| \widehat \eta\Big(\frac{\rho_\chi-\frac 12}{2\pi i }\Big)\Big|^4\,. \end{equation*}
 By the bound~\cite{CCM}*{Theorem 5} on multiplicities of zeros, the total contribution of the error term is 
$$ \ll_{\eta} H_s s^2\frac{ ( {\kappa_\eta} \log(A(\vartheta)))^{s}}{\log\log A(\vartheta)^{\frac 3{\vartheta(1)}}} \ll H_s s^2  \frac{(  {\kappa_\eta}\vartheta(1) [K:\Q] \log(\rd_L+2))^s}{\log\log(\rd_L+2)}, $$
by Lemma~\ref{lemma evaluation Artin conductor}. Denoting
\[
b(\chi;h):=\sum_{\rho_\chi}  h\Big(\frac{\rho_\chi-\frac 12}{2\pi i }\Big)\,,
\]
we use the same bound~\cite{CCM}*{Theorem 5} on multiplicities in order to replace $b_0(\vartheta;|\widehat \eta|^2) $ with $b(\vartheta;|\widehat \eta|^2)$. Indeed the difference between these two quantities is
$\ll H_s s^2 \frac{( {\kappa_\eta}\vartheta(1) [K:\Q] \log(\rd_L+2))^s}{\log\log(\rd_L+2)}.$
The proof is finished by  applying~\cite{BFJ}*{Lemma 4.2} as well as Lemma~\ref{lemma combinatorics}.
\end{proof}
We are now ready to prove Theorem~\ref{theorem main}.

\begin{proof}[Proof of Theorem~\ref{theorem main}]
The proof is obtained by combining Lemmas~\ref{lemma first bound V} and ~\ref{lemme V2s ineg}.
\end{proof}

\section{Groups having an irreducible character of maximal degree: Proof of Theorem~\ref{theorem main 2}} \label{section:LargeDegree}

In this section we focus on a specific case of groups $G=\Gal(L/K)$ having an irreducible character of maximal degree (with respect to the order of the group). Our goal will be to establish a lower bound on the averaged moment $ M_{n,1} (U;L/K,\Phi,\eta)$ (recall~\eqref{eq:averaged moment}) 
and prove Theorem~\ref{theorem main 2}, which will allow us to deduce non-Gaussian behaviour in families of such Galois extensions.

Let $G$ be a non-trivial finite group and let $\chi$ be an irreducible representation of $G$ of degree $d=\chi(1)$. It is a classical fact form the representation theory of finite groups over $\C$ that $d\mid |G|$ and $d^2\leq |G|$. Therefore one may write $|G|=d(d+e)$ for some positive integer $e$. Note that if $e<d$ then $G$ has a unique irreducible character of degree $d$. 



In~\cite{Sny}, Snyder classifies the groups for which $e=1$ and shows that for fixed $e\geq 2$ there are only finitely many (with an explicit bound depending only on $e$) isomorphism classes of groups $G$ such that $|G|=d(d+e)$. In order to state Snyder's result, we recall the definition of \textit{Frobenius group} (see~\cite{Hup}*{Prop. 16.5}). Let $G$ be a finite group having a normal subgroup $N$ and a subgroup $H\not\in\{\{1\},G\}$ such that $G=NH$. The group $G$ is a \textit{Frobenius group of kernel $N$ and complement $H$} if, for all $h\in H\smallsetminus\{1\}$, conjugation by $h$ acts without fixed points on $N\smallsetminus\{1\}$. If in addition the action of $H$ on $N\smallsetminus\{1\}$ is transitive, one says that $G$ is a \textit{doubly transitive} Frobenius group.

For $e=1$, Snyder's result is the following (see~\cite{Sny}*{Prop. 2.1}).



\begin{proposition}[Snyder] \label{prop:snyder}
Let $G$ be a finite group. The following assertions are equivalent.
\begin{enumerate}
\item There exists $\chi\in\Irr(G)$ of degree $d$ such that $d(d+1)=|G|$,
\item Either $G\simeq \Z/2\Z$ or $G$ is a doubly transitive Frobenius group \textit{i.e.} it is isomorphic to the semi direct product of a subgroup $N$ of order $d+1$ by a subgroup $H$ of order $d$ that acts freely and transitively by conjugation on $N\smallsetminus\{1\}$. 
\end{enumerate}
Moreover if $|G|>2$ the character $\chi$ in (1), which is necessarily unique, is induced from any non trivial character of degree $1$ of the subgroup $N$ of (2).

\end{proposition}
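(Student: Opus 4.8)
The plan is to prove the two implications separately. The implication $(2)\Rightarrow(1)$, together with the final ``moreover'' clause, is the routine part and rests on the character theory of Frobenius groups; the implication $(1)\Rightarrow(2)$ is the genuine content.

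For $(2)\Rightarrow(1)$ I would dispose of $G\simeq\Z/2\Z$ at once ($d=1$ and $|G|=1\cdot 2$) and then treat $G=N\rtimes H$ with $|N|=d+1$, $|H|=d$, where $H$ acts freely and transitively on $N\smallsetminus\{1\}$ by conjugation. I would first record that $N$ is elementary abelian: transitivity makes all non-identity elements of $N$ conjugate in $G$, hence of a common prime order $p$, so $N$ is a $p$-group of exponent $p$; as $Z(N)$ is a nontrivial $H$-invariant subgroup, transitivity forces $Z(N)=N$, so $N$ is abelian and thus elementary abelian of order $d+1=p^{k}$. Dually $H$ permutes the $d$ nontrivial characters of $N$ freely and transitively, so a nontrivial $\lambda\in\Irr(N)$ has trivial stabiliser in $H$ and inertia group exactly $N$ in $G$; by the Clifford correspondence ${\rm Ind}_N^G\lambda$ is then irreducible of degree $[G:N]=d$, giving $(1)$. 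Since all nontrivial $\lambda$ lie in one $H$-orbit they induce the same character, which yields the ``moreover'' clause, uniqueness of $\chi$ being the remark (recalled before the proposition) that a unique character of degree $d$ exists once $e<d$.

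For $(1)\Rightarrow(2)$, if $d=1$ then $|G|=2$ and $G\simeq\Z/2\Z$, so assume $d\ge 2$. Then $e=1<d$, so $\chi$ is the unique irreducible character of degree $d$, and $2d^{2}>|G|$ forces $\chi$ to be real. For any linear character $\lambda$ the twist $\lambda\chi$ is irreducible of degree $d$, hence $\lambda\chi=\chi$; comparing values gives $\chi(g)\big(\lambda(g)-1\big)=0$ for all $g$, so $\chi$ vanishes off the commutator subgroup $G'$. I would then take the prospective Frobenius kernel to be $N:=\bigcap_{\psi\in\Irr(G),\ \psi\neq\chi}\ker\psi$. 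The degree identity $\sum_{\psi}\psi(1)^{2}=|G|=d(d+1)$ gives $\sum_{\psi\neq\chi}\psi(1)^{2}=d$; since every $\psi\neq\chi$ is trivial on $N$ we have $\Irr(G/N)\supseteq\{\psi\neq\chi\}$, and because $\bigcap_{\text{all }\psi}\ker\psi=\{1\}$ either $N=\{1\}$ or $\chi$ is nontrivial on $N$, in which case $\Irr(G/N)=\{\psi\neq\chi\}$ and hence $|N|=d+1$. As $\gcd(d+1,d)=1$, Schur--Zassenhaus then provides a complement $H$ of order $d$, so $G=N\rtimes H$, and it remains to check that $H$ acts on $N$ fixed-point-freely (so that $G$ is Frobenius with kernel $N$) and transitively on $N\smallsetminus\{1\}$; equivalently, that $\chi(n)=-1$ for every $1\neq n\in N$ and that $N$ is elementary abelian.

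The main obstacle is exactly this last direction, and within it the step that $N\neq\{1\}$ — equivalently that $\chi$ actually attains the value $-1$, equivalently that $1+\chi$ is the permutation character of a sharply $2$-transitive action of $G$ on $d+1$ points. The power-sum relations $\sum_{g\neq 1}\chi(g)=-d$ and $\sum_{g\neq 1}|\chi(g)|^{2}=d$ (with $\chi$ supported on $G'$) push the values of $\chi$ towards $\{-1,0\}$ but do not by themselves exclude intermediate values, so determining the value distribution of $\chi$, and thereby the sharply $2$-transitive (Frobenius) structure with elementary abelian kernel, is the substantial part; this is where I would carry out, or invoke, Snyder's argument in \cite{Sny}*{Prop.~2.1}.
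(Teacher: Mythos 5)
The paper itself offers no proof of this proposition: it is quoted verbatim from Snyder and justified only by the citation \cite{Sny}*{Prop.~2.1}, so there is no internal argument to measure you against. Judged on its own terms, your treatment of $(2)\Rightarrow(1)$ and of the ``moreover'' clause is correct and complete: the reduction to $N$ elementary abelian, the dual transitivity of $H$ on $\Irr(N)\smallsetminus\{1_N\}$ (which deserves one more word --- it follows from Brauer's permutation lemma, since $h$ and its contragredient have equally many fixed points), and the Clifford-theoretic irreducibility of ${\rm Ind}_N^G\lambda$ are all sound, and uniqueness of $\chi$ does follow from $e=1<d$ as recalled in the paper. Your setup for $(1)\Rightarrow(2)$ --- reality of $\chi$, vanishing off $G'$, the candidate kernel $N=\bigcap_{\psi\neq\chi}\ker\psi$ with $|N|=d+1$ unless $N=\{1\}$, and Schur--Zassenhaus --- is likewise correct.

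But the gap you flag at the end is genuine, and it is not closed by what you have written: you must still show $N\neq\{1\}$, that $\chi$ takes the value $-1$ exactly on $N\smallsetminus\{1\}$, and that the resulting extension is Frobenius and sharply transitive. The one concrete idea you are missing, and which unlocks the value distribution you say the power sums ``do not by themselves exclude,'' is integrality: since $\chi$ is the \emph{unique} irreducible character of degree $d$, it is fixed by every Galois automorphism of $\Q(\zeta_{|G|})/\Q$, hence rational-valued, hence integer-valued. The two relations $\sum_{g\neq 1}\chi(g)=-d$ and $\sum_{g\neq 1}\chi(g)^2=d$ then give $\sum_{g\neq 1}\chi(g)\bigl(\chi(g)+1\bigr)=0$ with every summand a nonnegative integer, forcing $\chi(g)\in\{0,-1\}$ for all $g\neq 1$, and the first relation shows $\chi=-1$ on exactly $d$ elements. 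Even after that, identifying $\{1\}\cup\chi^{-1}(-1)$ with the normal subgroup $N$ and extracting the fixed-point-free transitive action of $H$ still requires the Gagola/Snyder analysis of a character vanishing off two classes; so the hard direction remains deferred to \cite{Sny}, exactly as in the paper, and your proposal should be read as a correct proof of the easy direction plus a correct reduction, not a complete proof.
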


Note that the characterization of doubly transitive Frobenius groups mentioned in Proposition~\ref{prop:snyder}(2) is straightforward (indeed for such a group
$N\cap H$ is trivial, from the fact that conjugation by any $h\in H$ is fixed point free on $N\smallsetminus\{1\}$). 

\begin{remark}
The classification results of Snyder are more general than what is stated in Proposition~\ref{prop:snyder}. Indeed under the assumption $|G|=d(d+e)$ where $d=\lfloor\sqrt{|G|}\rfloor>1$ is the degree of some $\chi\in\Irr(G)$, we have $e\in\{1,2\}$ and Snyder proves~(\cite{Sny}*{Th. 1.1, Th. 6.2}) that $G$ is either a doubly transitive Frobenius group (case $e=1$), or a non abelian group of order $8$ (case $e=2$).

\end{remark}

In the sequel we assume $d>1$, therefore excluding the case $G\simeq \Z/2\Z$ of Proposition~\ref{prop:snyder}(2). 
We next recall~\cite{Hup}*{Th. 18.7} which describes the representation theory of Frobenius groups.

\begin{proposition} \label{prop:carGpFrob}
Let $G$ be a Frobenius group of kernel $N$ and complement $H$. The group $H$ acts on $\Irr(N)$ as follows
$$
\varphi^h:=h\cdot \varphi\colon \big(x\mapsto \varphi(hxh^{-1})\big)\qquad (\varphi\in\Irr(N), h\in H)\,.
$$
The orbits in this action are $\{\varphi_1= \mathds{1}_N\}$ as well as $\{\varphi^h\colon h\in H\}$ that has cardinality $|H|$ for any $\varphi\in\Irr(N)\smallsetminus\{ \mathds{1}_N\}$. Let $\{\varphi_j\colon 2\leq j\leq s\}$ be a system of representatives of the orbits in this action. A set of representatives for the isomorphism classes of the irreducible representations of $G$ is given by
\begin{enumerate}
\item the irreducible representations of $H$ \textit{via} $G\to G/N\simeq H$,
\item the induced representations ${\rm Ind}_N^G\varphi_j$, each of degree $|H|\varphi_j(1)$, for $2\leq j\leq s$. These representations satisfy $N=\{x\in G\colon { {\rm Tr}}\big({\rm Ind}_N^G\varphi_j(x)\big)\neq 0\}$.
\end{enumerate}
In particular $G$ has $|\Irr(H)|+(|\Irr(N)|-1)/|H|$ irreducible characters.
\end{proposition}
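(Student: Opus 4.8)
The plan is to run Clifford theory relative to the normal subgroup $N\trianglelefteq G$, using the Frobenius (fixed-point-free) hypothesis to pin down the inertia groups of the characters of $N$. Every $\rho\in\Irr(G)$ lies over a single $H$-orbit of characters of $N$, and the two families in the statement correspond exactly to the orbit $\{1_N\}$ and to the nontrivial orbits; the whole argument rests on the claim that $H$ acts freely on $\Irr(N)\setminus\{1_N\}$.

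First I would record the action. Since irreducible characters are class functions and $N$ is normal, the formula $\varphi^h(x)=\varphi(hxh^{-1})$ defines an action of $G$ on $\Irr(N)$ that is trivial on $N$, hence an action of $H\simeq G/N$, with $1_N$ a fixed point. To show this action is free away from $1_N$, I would suppose that $1\neq h\in H$ fixes a nontrivial conjugacy class $x^N$ of $N$ (setwise). Then $hxh^{-1}=nxn^{-1}$ for some $n\in N$, so $g:=n^{-1}h$ centralizes $x$; since in a Frobenius group one has $C_G(x)\subseteq N$ for every $1\neq x\in N$, this forces $g\in N$, whence $h\in N\cap H=\{1\}$, a contradiction. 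Thus a nontrivial $h$ fixes only the trivial class, and by Brauer's permutation lemma it therefore fixes only $1_N$ in $\Irr(N)$. Consequently every nontrivial $H$-orbit on $\Irr(N)$ has size $|H|$; choosing representatives $\varphi_2,\dots,\varphi_s$ yields $s-1=(|\Irr(N)|-1)/|H|$ of them, and all characters in a given orbit share the same degree.

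Next I would feed this into Clifford theory. For nontrivial $\varphi_j$ the inertia group is $I_G(\varphi_j)=N\cdot I_H(\varphi_j)=N$ by freeness, so Clifford's irreducibility criterion makes $\mathrm{Ind}_N^G\varphi_j$ irreducible of degree $[G:N]\varphi_j(1)=|H|\varphi_j(1)$; distinct orbits give non-isomorphic inductions because the restriction to $N$ recovers the orbit. On the other side, the irreducibles lying over $1_N$ are precisely those with $N\subseteq\ker$, i.e. the inflations of $\Irr(G/N)=\Irr(H)$. Completeness is then immediate: any $\rho\in\Irr(G)$ lies over some $\varphi\in\Irr(N)$, and is of type (1) if $\varphi=1_N$ and equal to $\mathrm{Ind}_N^G\varphi_j$ otherwise. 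As a cross-check, $\sum\deg^2=|H|+|H|^2\sum_{j\geq2}\varphi_j(1)^2=|H|+|H|(|N|-1)=|G|$, and counting characters gives $|\Irr(H)|+(|\Irr(N)|-1)/|H|$.

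Finally, for the support statement I would invoke the induced-character formula: since $N$ is normal, a $G$-conjugate of $g$ meets $N$ only when $g\in N$, so $\mathrm{Tr}(\mathrm{Ind}_N^G\varphi_j(g))=0$ for $g\notin N$, giving the inclusion $\{g:\mathrm{Tr}\neq0\}\subseteq N$. On $N$ the character equals the orbit sum $\sum_{h\in H}\varphi_j^h$, and I expect the \textbf{main obstacle} to be the reverse inclusion, namely that this orbit sum has no zero on $N$: the values $\sum_{h}\varphi_j(hxh^{-1})$ are not manifestly nonzero. For the doubly transitive groups used in the paper this is transparent, since then $N$ is elementary abelian and $s=2$, so $\mathrm{Ind}_N^G\varphi_2|_N=\rho_N-1_N$ (the regular minus the trivial character of $N$), which takes the value $-1$ on $N\setminus\{1\}$. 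In general the nonvanishing follows from a Galois-conjugacy argument generalizing the classical nonvanishing of Gaussian periods, and this is the step I would treat with the most care.
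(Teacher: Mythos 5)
The paper offers no proof of this proposition to compare against: it is recalled verbatim from Huppert \cite{Hup}, Theorem 18.7. Your Clifford-theoretic derivation of the structural claims is correct and standard: the freeness of the $H$-action on $\Irr(N)\smallsetminus\{1_N\}$ via $C_G(x)\subseteq N$ and Brauer's permutation lemma, the identification $I_G(\varphi_j)=N$ giving irreducibility of ${\rm Ind}_N^G\varphi_j$ of degree $|H|\varphi_j(1)$, the completeness argument, the degree identity and the character count all go through. (The fact $C_G(x)\subseteq N$ for $1\neq x\in N$ deserves one extra line starting from the paper's definition of a Frobenius group: one first checks $H\cap H^{n}=\{1\}$ for $1\neq n\in N$, so that $G$ is the disjoint union of $N$ and the conjugates of $H\smallsetminus\{1\}$, and then any centralizing element outside $N$ would be conjugate into $H$ and produce a fixed point. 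This is routine.)

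The genuine gap is exactly where you locate it, but it is worse than you suggest: the reverse inclusion $N\subseteq\{x\colon {\rm Tr}({\rm Ind}_N^G\varphi_j(x))\neq 0\}$ is \emph{false} for general Frobenius groups, so no Galois-conjugacy argument can close it. Take $N$ extraspecial of order $7^3$ and exponent $7$, with coordinates $(a,b,c)$ and product $(a,b,c)(a',b',c')=(a+a',b+b',c+c'+ab')$, and let $\sigma(a,b,c)=(2a,2b,4c)$; then $\sigma$ is an automorphism of order $3$ without nontrivial fixed points, so $G=N\rtimes\langle\sigma\rangle$ is a Frobenius group of order $1029$ with kernel $N$ and complement $H=\langle\sigma\rangle$. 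If $\varphi_j$ is one of the six irreducible characters of $N$ of degree $7$, then $\varphi_j$ and all its $H$-conjugates vanish off $Z(N)$, hence ${\rm Ind}_N^G\varphi_j$ vanishes on the $336$ elements of $N\smallsetminus Z(N)$, and the stated equality of supports fails. The clause should be weakened to the inclusion $\{x\colon{\rm Tr}({\rm Ind}_N^G\varphi_j(x))\neq0\}\subseteq N$ in general; the equality does hold in the only situation the paper uses, namely the doubly transitive case of Corollary~\ref{cor:doubtrans}, where $N$ is abelian and there is a single nontrivial orbit, so that ${\rm Ind}_N^G\varphi_2$ restricted to $N$ is the regular character of $N$ minus $1_N$ and equals $-1$ on $N\smallsetminus\{1\}$ --- precisely the computation you give. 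So your argument is complete for every application made of the proposition, and your instinct that the last clause is the delicate one is vindicated, but the proposed general fix cannot work because the clause, as stated, is not true in general.
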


\begin{corollary}\label{cor:doubtrans} 
{  Let $G$ be a doubly transitive Frobenius group of kernel $N$ and complement $H$.} Assume that $H$ is abelian and non trivial. Then $G$ has only one non abelian irreducible character: it is induced by any non trivial character of degree $1$ of $N$. Moreover $N$ is then necessarily abelian and $N\smallsetminus\{1\}$ is a conjugacy class of $G$.
\end{corollary}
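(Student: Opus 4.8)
The plan is to deduce Corollary~\ref{cor:doubtrans} directly from Proposition~\ref{prop:carGpFrob} by exploiting the hypothesis that the complement $H$ is abelian and the group is \emph{doubly} transitive. First I would recall from Proposition~\ref{prop:carGpFrob} that the irreducible characters of $G$ split into those factoring through $G/N\simeq H$ (which have degree dividing $|H|$, hence are abelian precisely because $H$ is abelian, so all of these have degree $1$) and the induced characters ${\rm Ind}_N^G\varphi_j$ of degree $|H|\varphi_j(1)$ for $2\leq j\leq s$. Since $H$ is abelian, every irreducible character of $H$ is linear, so the characters in family (1) are all abelian; thus any non-abelian irreducible character of $G$ must come from family (2).

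Next I would show $N$ is abelian and that there is exactly one orbit of non-trivial characters of $N$. The double transitivity of $G$ means $H$ acts transitively on $N\smallsetminus\{1\}$; since $|N|=d+1$ and $|H|=d$, the set $N\smallsetminus\{1\}$ has exactly $d=|H|$ elements, so $H$ acts simply transitively on it. The $H$-action on $\Irr(N)\smallsetminus\{1_N\}$ described in Proposition~\ref{prop:carGpFrob} has all orbits of size $|H|=d$, and $|\Irr(N)\smallsetminus\{1_N\}|=|N|-1=d$ once we know $N$ is abelian; this forces a single orbit, so $s=2$ in the notation of Proposition~\ref{prop:carGpFrob} and there is exactly one induced character ${\rm Ind}_N^G\varphi_2$, of degree $|H|\cdot 1=d$. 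To see $N$ is abelian, I would argue that $H$ acting simply transitively (fixed-point-freely and transitively) on $N\smallsetminus\{1\}$ by conjugation makes $N$ a group admitting a fixed-point-free automorphism group acting transitively on its non-identity elements; a short argument (all non-identity elements are conjugate in $G$, hence have the same order, a prime $p$, and the Frobenius-kernel structure forces $N$ to be an elementary abelian $p$-group) gives that $N$ is abelian, in fact elementary abelian. The cleanest route is: since $N\smallsetminus\{1\}$ is a single $H$-orbit under conjugation, it is a single conjugacy class of $G$, and a group in which all non-identity elements are conjugate is elementary abelian.

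Assembling these facts: family (2) contributes the single character ${\rm Ind}_N^G\varphi_2$ of degree $d>1$, which is the unique non-abelian irreducible character of $G$, and it is induced from the non-trivial degree-$1$ character $\varphi_2$ of $N$ (and by transitivity of the orbit, from \emph{any} non-trivial degree-$1$ character of $N$). The property $N\smallsetminus\{1\}=\{x\in G:{\rm Tr}({\rm Ind}_N^G\varphi_2(x))\neq 0\}\smallsetminus\{1\}$ from Proposition~\ref{prop:carGpFrob}, combined with $N\smallsetminus\{1\}$ being a single $G$-conjugacy class, gives the final assertion that $N\smallsetminus\{1\}$ is a conjugacy class of $G$.

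The main obstacle is establishing that $N$ is abelian rather than merely that $H$ is abelian, since Proposition~\ref{prop:carGpFrob} does not assume this; the key leverage is that double transitivity forces $N\smallsetminus\{1\}$ to be a single conjugacy class of $G$, and the standard fact that a finite group whose non-identity elements form one conjugacy class must be elementary abelian then closes the gap. Once $N$ is known to be abelian, the counting of orbit sizes ($d=|H|$ orbits forced into exactly one orbit) is routine, and the remaining statements read off immediately from Proposition~\ref{prop:carGpFrob}.
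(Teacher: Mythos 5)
Your proof is correct, but it reaches the key conclusions by a genuinely different route from the paper. The paper's argument is purely character-theoretic: it plugs the decomposition of Proposition~\ref{prop:carGpFrob} into the identity $|G|=\sum_{\chi\in\Irr(G)}\chi(1)^2$, getting $d(d+1)=d\bigl(1+d\sum_{j=2}^s\varphi_j(1)^2\bigr)$, which forces $s=2$ and $\varphi_2(1)=1$ in one stroke; the abelianness of $N$ is then \emph{deduced} from the fact that $\Irr(N)\smallsetminus\{1_N\}$ is the single orbit of the linear character $\varphi_2$, so every irreducible character of $N$ is linear. You invert this logic: you first prove $N$ is abelian by group theory (transitive $H$-action on $N\smallsetminus\{1\}$ plus normality makes $N\smallsetminus\{1\}$ a single $G$-conjugacy class, whence all non-identity elements have a common prime order $p$, $N$ is a $p$-group, and its nontrivial characteristic center must exhaust $N$), and only then count: $|\Irr(N)\smallsetminus\{1_N\}|=d$ splits into orbits of size $|H|=d$, so there is one orbit and $s=2$. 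Both are valid; the paper's counting argument is shorter and never leaves representation theory, while yours yields the sharper structural fact that $N$ is elementary abelian and makes the ``$N\smallsetminus\{1\}$ is a conjugacy class'' assertion a stepping stone rather than an afterthought. One point you should spell out rather than cite as a ``standard fact'': the statement that a group all of whose non-identity elements are conjugate is elementary abelian is false if the conjugation is internal to the group (it forces order $\le 2$); what you actually need, and what your sketch correctly uses, is that a \emph{normal} subgroup $N\trianglelefteq G$ with $N\smallsetminus\{1\}$ a single $G$-class is elementary abelian, the proof being the common-prime-order and characteristic-center argument you outline. The remaining steps (linearity of the lifted characters since $H$ is abelian, induction from any non-trivial linear character of $N$ by transitivity on the orbit) match the paper.
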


\begin{proof}
Let $d:=|H|>1$, hence $|G|=d(d+1)$ since $N\cap H$ is trivial and $N\smallsetminus\{1\}$ is a single orbit, with trivial stabilizer, under the action of $H$. We use the fact that $H$ is abelian combined with Proposition~\ref{prop:carGpFrob} to deduce that
$$
d(d+1)=|G|=\sum_{\chi\in\Irr(G)}\chi(1)^2=|H|+\sum_{j=2}^s|H|^2\varphi_j(1)^2=d\Big(1+d\sum_{j=2}^s\varphi_j(1)^2\Big)\,.
$$
Therefore $s=2$ and $\varphi_2(1)=1$. By Proposition~\ref{prop:carGpFrob} this implies that $G$ has a unique non abelian irreducible character defined by ${\rm Tr}\big({\rm Ind}_N^G\varphi_2\big)$. Moreover $\Irr(N)\smallsetminus\{\mathds{1}_N\}$ is the orbit of the abelian character $\varphi_2$ under the action of $H$, hence $N$ is abelian.


Finally since $H$ acts transitively by conjugation on $N\smallsetminus\{1\}$ and since $N$ is normal in $G$ we deduce that
$N\smallsetminus\{1\}$ is a conjugacy class of $G$. 
\end{proof}


In~\cite{FJ}*{\S 9.2} and in~\cite{BFJ}*{\S 2.2}, a particular family of Galois extensions of $\Q$ having a doubly transitive Frobenius group as Galois group is considered. We recall its description in the example below.


\begin{ex} \label{ex:rad}
Let $a,p$ be distinct prime numbers such that $p\neq 2$ and $a^{p-1}\not\equiv 1\bmod p^2$ and let $K_{a,p}$ be the splitting field inside $\C$ of $X^p-a\in\Q[X]$. Then $G:=\Gal(K_{a,p}/\Q)$ is isomorphic to the group of transformations of the affine line $\mathbb{A}^\mathds{1}_{\F_p}$; it is a doubly transitive Frobenius group. 
Indeed $G$ can be described as follows:
$$
G\simeq  \left\{\left(\begin{array}{cc}
c & d \\
0 & 1
\end{array} \right)\colon c\in \F_p^*, d\in\F_p\right\}\,.
$$
Here the Frobenius kernel $N$ and the Frobenius complement $H$ are respectively
$$
N:=\left\{\left(\begin{array}{cc}
1 & \star \\
0 & 1
\end{array} \right)\right\}\,,
\qquad 
H=\left\{\left(\begin{array}{cc}
\star & 0 \\
0 & 1
\end{array} \right)\colon \star\in\F_p^\times\right\}\,.
$$
The group $G$ admits $p$ irreducible characters: $p-1$ of them are lifts of the characters of $H$ \textit{via} the isomorphism $G/N\simeq H$. The unique non abelian irreducible character of $G$ has degree $p-1$ and is non zero only on two conjugacy classes: $\{1\}$ and $N\smallsetminus\{1\}$.



\end{ex}

 The last property mentioned in the example is more generally enjoyed by any doubly transitive Frobenius group $G$ (as seen by combining Proposition~\ref{prop:snyder} and Proposition~\ref{prop:carGpFrob}(2)): the unique non abelian irreducible character of $G$ vanishes on all but two conjugacy classes. This property, which has been studied by group theorists for independent reasons (see \textit{e.g.}~\cite{Gag}), implies in our case a general form of orthogonality of characters, as we now show.

\begin{proposition}\label{prop ortho pour G=d(d+1)}
Let $G$ be a doubly transitive Frobenius group of order $d(d+1)\geq 6$ with kernel $N$ and abelian complement $H$ of order $d$. Denote by $\{1=\psi_1,\ldots,\psi_d\}$ the irreducible characters of $H$, which all have degree $1$ (by abuse of notation we will see these characters as characters of degree $1$ of $G$). Finally denote by $\vartheta$ the unique non abelian character of $G$. It satisfies
$$
\vartheta(1)=d\,, \qquad \vartheta(N\smallsetminus\{1\})=-1\,,\qquad \big(g\in G\smallsetminus N\Rightarrow \vartheta(g)=0\big)\,.
$$ 
In particular, for any $n\geq 1$ and any choice of irreducible characters $\chi_1,\ldots,\chi_n$ (with possible repetitions) of $G$, one has the generalized orthogonality relations
\begin{equation}\label{eq:orthoFrobenius}
\sum_{C\in G^\sharp}|C| \chi_1(C) \cdots \chi_n(C) = \left.
  \begin{cases}
    |G|\delta(\chi_1\cdots\chi_n=1), & \text{if } \chi_i\neq \vartheta\,,\text{ for all }i\,, \\
    d^k+(-1)^kd, & \text{if } \chi_i= \vartheta\,,\text{ for $k\geq 1$ values of }i\,.
  \end{cases}\right.
\end{equation}
\end{proposition}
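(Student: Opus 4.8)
The plan is to first pin down the full character table of the non-abelian character $\vartheta$, and then reduce both orthogonality relations to two elementary computations, one for each case displayed in~\eqref{eq:orthoFrobenius}.

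First I would record the values of $\vartheta$. By Corollary~\ref{cor:doubtrans}, $\vartheta={\rm Ind}_N^G\varphi$ for any non-trivial degree-one character $\varphi$ of $N$, the set $N\setminus\{1\}$ is a single conjugacy class of $G$, and $N$ is abelian. Proposition~\ref{prop:carGpFrob}(2) then yields at once the degree $\vartheta(1)=|H|\varphi(1)=d$ and the vanishing statement $N=\{g\in G:\vartheta(g)\neq 0\}$, i.e. $\vartheta(g)=0$ for $g\in G\setminus N$. It remains only to evaluate $\vartheta$ on the single class $N\setminus\{1\}$, which has cardinality $|N|-1=d$. I would obtain this from orthogonality with the trivial character: since $\vartheta\neq 1$ we have $\sum_{g\in G}\vartheta(g)=0$, and because $\vartheta$ vanishes off $N$ this reads $d+d\cdot\vartheta(N\setminus\{1\})=0$, whence $\vartheta(N\setminus\{1\})=-1$. (Alternatively one can evaluate the induced character directly, using that the $G$-conjugates of a fixed $x\in N\setminus\{1\}$ exhaust $N\setminus\{1\}$ and that $\sum_{y\in N}\varphi(y)=0$.)

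For the orthogonality relations I would apply the identity $\sum_{C\in G^\sharp}|C|f(C)=\sum_{g\in G}f(g)$, valid for any class function $f$, to $f=\chi_1\cdots\chi_n$. In the first case every $\chi_i$ is one of the linear characters $\psi_j$, so the product $\chi_1\cdots\chi_n$ is itself a linear (hence irreducible) character of $G$, and the sum equals $|G|\langle\chi_1\cdots\chi_n,1\rangle_G=|G|\delta(\chi_1\cdots\chi_n=1)$, as claimed.

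In the second case, after reordering (the product is symmetric in the $\chi_i$), write $k$ of the factors as $\vartheta$ and let $\psi$ denote the product of the remaining $n-k$ linear factors; each such factor is a lift from $G/N\simeq H$ and is therefore trivial on $N$, so $\psi\equiv 1$ on $N$. Since $\vartheta$ vanishes on $G\setminus N$, only $g\in N$ contribute, and there $\sum_{g\in G}\vartheta(g)^k\psi(g)=\sum_{g\in N}\vartheta(g)^k=\vartheta(1)^k+|N\setminus\{1\}|\cdot(-1)^k=d^k+(-1)^kd$, using the values found above. The computation is genuinely routine: the only points requiring care are the sign of $\vartheta$ on $N\setminus\{1\}$ and the observation that the linear characters restrict trivially to $N$, which is precisely what makes the $\psi$-factors disappear and isolates the clean answer $d^k+(-1)^kd$. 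I do not anticipate a serious obstacle beyond keeping these two bookkeeping points straight.
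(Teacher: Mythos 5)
Your proposal is correct and follows essentially the same route as the paper, whose proof is a one-line citation of Corollary~\ref{cor:doubtrans}, the normalization of $\vartheta$, and the triviality of the linear characters on $N$; you simply fill in the same details. The only (minor, and arguably favorable) deviation is that you derive $\vartheta(N\smallsetminus\{1\})=-1$ from $\sum_{g\in G}\vartheta(g)=0$, which pins down the sign directly, whereas the paper invokes $\langle\vartheta,\vartheta\rangle_G=1$, which by itself only gives $|\vartheta(N\smallsetminus\{1\})|=1$.
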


\begin{proof}
This is a consequence of Corollary~\ref{cor:doubtrans} together with the fact that $\langle\vartheta,\vartheta\rangle_G=1$ and the equality $\psi_j(N)=\{1\}$ which holds for all $j$.
\end{proof}

For a Galois extension of number fields $L/K$ such that $G=\Gal(L/K)$ is a doubly transitive Frobenius group with a unique non abelian character $\vartheta$, we will now bound $(-1)^n M_{n,1}(U;L/K,\Phi,\eta)$ in terms of the sums
$$ S_{n,k}(L/K,\eta):= \!\!\!\!\!\!\!\!\!\sum_{\substack{  \chi_1,\dots,\chi_{n-k} \in \Irr(G) \\ \forall i, \chi_i \neq \vartheta  }}\sum_{\substack{ \gamma_{\chi_1},\dots,\gamma_{\chi_{n-k}} , \gamma_{\vartheta,1},\dots,\gamma_{\vartheta,k}  \neq 0\\ \gamma_{\chi_1}+\dots+\gamma_{\chi_{n-k}} + \gamma_{\vartheta,1}+\dots+\gamma_{\vartheta,k} = 0}}   \prod_{i=1}^{n-k} \widehat \eta\Big( \frac {\gamma_{\chi_i}}{2\pi} \Big) \prod_{j=1}^{k} \widehat \eta\Big( \frac {\gamma_{\vartheta,j}}{2\pi} \Big) \qquad (k\leq n-1);$$
$$ S_{n,n}(L/K,\eta):=\sum_{\substack{  \gamma_{\vartheta,1},\dots,\gamma_{\vartheta,n}  \neq 0\\ \gamma_{\vartheta,1}+\dots+\gamma_{\vartheta,n} = 0}}    \prod_{j=1}^{n} \widehat \eta\Big( \frac {\gamma_{\vartheta,j}}{2\pi} \Big) .$$

\begin{lemma} \label{lem:minM1}
Let $G$ be a doubly transitive Frobenius group of order $d(d+1) { \geq 6}$ with kernel $N$ and abelian complement $H$ of order $d$. Let $L/K$ be a Galois extension of number fields of group $G$, and assume GRH. Let $\eta\in \mathcal S_\delta$, $\Phi\in \mathcal U$. For $U\geq 1$, we have the following bound: 
\begin{multline}
 (-1)^n M_{n,1} (U;L/K,\Phi,\eta)\geq    \sum_{\substack{ \chi_1,\dots,\chi_{n} \in \Irr(G) \\ \forall i, \chi_i \neq \vartheta \\ \chi_1\cdots \chi_n =1  }}\sum_{\substack{ \gamma_{\chi_1},\dots,\gamma_{\chi_{n}} \neq 0\\ \sum_{i=1}^{n} \gamma_{\chi_i} = 0}}    \prod_{i=1}^{n} \widehat \eta\Big( \frac {\gamma_{\chi_i}}{2\pi} \Big) \\
 +\sum_{k=1}^n \binom nk\frac{ d^{k-1}+(-1)^k}{d+1}  S_{n,k}(L/K,\eta) +O_{\Phi}\Big(\frac{(d+1)^{2n-2} (\kappa_\eta[K:\Q] \log(\rd_L+2))^n}U\Big),
 \label{eq:lemma 3.5}
 \end{multline}
where $\vartheta$ denotes the unique non-abelian character of $G$, and the $\gamma_{\vartheta,j}$ run through the imaginary parts of the non-trivial zeros of $L(s,L/K,\vartheta)$. Here, $\kappa_\eta>0$ is a constant which depends on $\eta$. In particular, since all summands in this expression are non-negative, it follows that for $n=2m$,
\begin{equation}
   M_{2m,1} (U;L/K,\Phi,\eta)\geq
 \frac{ d^{2m-1}+1}{d+1}  S_{2m,2m}(L/K,\eta) +O_{\Phi}\Big(\frac {(d+1)^{4m-2} (\kappa_\eta[K:\Q] \log(\rd_L+2))^{2m}}U\Big).
 \label{eq:lemma 3.5 simplifiee}
 \end{equation}
\end{lemma}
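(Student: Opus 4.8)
The plan is to start from the Parseval-type expansion of $\M_n(u;L/K,\eta)$ analogous to~\eqref{eq:parseval}, combined with the explicit formula from \cite{BFJ}*{Lemma 4.1} used in the proof of Lemma~\ref{lemma first bound V}. First I would write each centered error term $\tfrac{|G|}{|C|}\psi_\eta(\e^u;L/K,1_C) - \e^{u/2}\mathcal L_\eta(\tfrac12) - \widehat\eta(0)z(L/K,t_C)$ as $-\sum_{\chi}\overline\chi(C)\sum_{\rho_\chi\notin\R}\e^{i\gamma_\chi u}\widehat\eta(\tfrac{\gamma_\chi}{2\pi})$ up to the same $O(\e^{-u/2}[K:\Q]\lambda_{1,1}(t_C)\log(\rd_L+2))$ error. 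Raising to the $n$-th power, averaging over conjugacy classes with weights $|C|/|G|$, and integrating against $\Phi(u/U)$, the character sum collapses via the generalized orthogonality relations~\eqref{eq:orthoFrobenius} of Proposition~\ref{prop ortho pour G=d(d+1)}. Here the factor $(-1)^n$ on the left absorbs the sign $(-1)^n$ coming from the $n$-fold product of the leading $-\sum_\chi(\cdots)$ terms, so that the main term on the right becomes manifestly a sum of products of $\widehat\eta$ values over tuples of zeros.

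The key combinatorial step is to organize the sum over $n$-tuples $(\chi_1,\dots,\chi_n)\in\Irr(G)^n$ according to the number $k$ of indices $i$ with $\chi_i=\vartheta$. For each fixed $k$, I would apply~\eqref{eq:orthoFrobenius}: when $k=0$ the orthogonality gives $|G|\,\delta(\chi_1\cdots\chi_n=1)$, which after dividing by $|G|=d(d+1)$ yields exactly the first sum on the right of~\eqref{eq:lemma 3.5} with all $\chi_i\neq\vartheta$ and $\chi_1\cdots\chi_n=1$; when $k\geq 1$ the relation gives $d^k+(-1)^k d$, and after dividing by $|G|=d(d+1)$ this produces the coefficient $\tfrac{d^{k-1}+(-1)^k}{d+1}$ multiplying the sum $S_{n,k}(L/K,\eta)$, with the binomial $\binom nk$ counting the positions of the $k$ copies of $\vartheta$. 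The integration in $u$ against $\Phi(u/U)$ turns $\e^{i(\sum\gamma)u}$ into $\widehat\Phi$ evaluated at $\tfrac{U\sum\gamma}{2\pi}$; discarding the off-diagonal contribution where $\sum\gamma\neq 0$ is legitimate because $\widehat\Phi,\widehat\eta\geq 0$ (as in Lemma~\ref{lemma first bound V}), which is precisely what turns the identity into the stated lower bound.

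For the error term I would bound the cross terms arising from the $O(\e^{-u/2}[K:\Q]\lambda_{1,1}(t_C)\log(\rd_L+2))$ remainder in the explicit formula. Expanding the $n$-th power produces mixed products of main and error contributions; after integrating $\int_0^\infty\Phi(u/U)\e^{-u/2}\,\d u\ll 1$ and dividing by $U\int_0^\infty\Phi$, each such term contributes a factor $U^{-1}$. The size is controlled using the Riemann--von Mangoldt bound $\sum_{\rho_\chi}\widehat\eta(\tfrac{\gamma_\chi}{2\pi})\ll\log A(\chi)\ll[K:\Q]\chi(1)\log(\rd_L+2)$ together with $\lambda_{1,1}(t_C)\le\lambda_1(G)$; for a doubly transitive Frobenius group one has $\lambda_1(G)=d\cdot 1+d=2d\le 2(d+1)$ and all character degrees are $\le d\le d+1$, so each of the $n$ factors is $\ll(d+1)[K:\Q]\log(\rd_L+2)$, giving the total $O(U^{-1}(d+1)^{2n-2}([K:\Q]\log(\rd_L+2))^n)$ after accounting for the degree factors in the orthogonality weights. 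The second assertion~\eqref{eq:lemma 3.5 simplifiee} then follows immediately: setting $n=2m$ and retaining only the $k=2m$ term, all other summands are non-negative and may be discarded, while the coefficient becomes $\tfrac{d^{2m-1}+1}{d+1}$ since $(-1)^{2m}=1$. The main obstacle I anticipate is bookkeeping the $(-1)^n$ sign and the division by $|G|$ consistently across the two cases of~\eqref{eq:orthoFrobenius}, and verifying that the error-term exponent $(d+1)^{2n-2}$ (rather than $(d+1)^{2n}$) is correct after the orthogonality-induced cancellations; this requires care in tracking which factors of $d$ come from character degrees versus from the weights $|C|$.
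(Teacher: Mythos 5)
Your proposal is correct and follows essentially the same route as the paper: expand $\M_n$ via orthogonality into products of $\psi^*_\eta(\e^u;L/K,\chi_j)$ weighted by $\sum_C|C|\overline{\chi_1}(C)\cdots\overline{\chi_n}(C)$, evaluate that sum with the generalized orthogonality relations~\eqref{eq:orthoFrobenius} split according to the number $k$ of occurrences of $\vartheta$, insert the explicit formula, integrate against $\Phi(u/U)$ to produce $\widehat\Phi$, and use positivity of $\widehat\Phi$ and $\widehat\eta$ to retain only the diagonal $\sum_j\gamma_j=0$, with the error handled exactly as you describe. The coefficient bookkeeping ($|G|=d(d+1)$ giving $\tfrac{d^{k-1}+(-1)^k}{d+1}$, the sign $(-1)^n$, and the reduction to $k=2m$ for~\eqref{eq:lemma 3.5 simplifiee}) all match the paper's argument.
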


\begin{proof}

Applying orthogonality of characters in the form
$$\psi_\eta(\e^u;L/K,\mathds{1}_C)
=\frac{|C|}{|G|}
\sum_{\chi \in \Irr(G)} \overline{\chi}(C)\psi_\eta(\e^u;L/K,\chi),$$
we obtain the expression 
\begin{align*} \M_n(u;L/K,\eta)   &= \frac 1{|G|} \sum_{C\in G^\sharp} |C|\big(\psi_\eta^*(\e^u,L/K,t_C)
\big)^n
\cr&= \frac 1{|G|}
\sum_{ \chi_1,\dots,\chi_n \in \Irr(G)} \prod_{j=1}^n   \psi^*_\eta(\e^u,L/K,\chi_j) \sum_{C\subset G^\sharp} |C|\overline{\chi_1}(C)\cdots \overline{\chi_n}(C),
\end{align*}
where we recall that $t_C=\frac{|G|}{|C|}\mathds{1}_C$ and the definition~\eqref{eq:psi*} of $\psi^*_\eta$.

 Combining this formula with Proposition~\ref{prop ortho pour G=d(d+1)}, it follows that 
\begin{align*} \M_n&(u ;L/K,\eta)
=\sum_{\substack{ \chi_1,\dots,\chi_n \in \Irr(G) \\ \forall i, \chi_i \neq \vartheta   \\ \chi_1\cdots\chi_n = 1 }} \prod_{j=1}^n \psi^*_\eta(\e^u,L/K,\chi_j)  \cr&  
+ \frac{1}{|G|} \sum_{k=1}^n \binom nk \psi^*_\eta(\e^u,L/K,\vartheta)^k  \sum_{\substack{\chi_1,\dots,\chi_{n-k} \in \Irr(G) \\ \forall i, \chi_i \neq \vartheta  }} \prod_{j=1}^{n-k} \psi^*_\eta(\e^u,L/K,\chi_j)    \big(d^k+(-1)^kd\big)\,, 
\end{align*} 
where by convention the innermost sum is equal to $1$ when $k=n$.
Hence,
\begin{align}
     (-1)^nM_{n,1}   (U;L/K,\Phi,\eta) 
   = M_{n,1}^{\mathrm{main}}(U;L/K,\Phi,\eta)
 +M_{n,1}^{\mathrm{err}}(U;L/K,\Phi,\eta)\,, \label{eq:seperation M1n two sums}\end{align}
 where 
 \begin{multline*}
      M_{n,1}^{\mathrm{main}}(U;L/K,\Phi,\eta):= \frac {(-1)^n}{|G|  \int_0^\infty \Phi} \sum_{k=1}^n \binom nk \big(d^k+(-1)^kd\big) \times \\ \int_0^\infty \Phi(\tfrac uU)\psi^*_\eta(\e^u,L/K,\vartheta)^k \sum_{\substack{ \chi_1,\dots,\chi_{n-k} \in \Irr(G) \\ \forall i, \chi_i \neq \vartheta  }}\prod_{j=1}^{n-k} \psi^*_\eta(\e^u,L/K,\chi_j)      \frac{\d u}{U}\,,
 \end{multline*}
 \[
 M_{n,1}^{\mathrm{err}}(U;L/K,\Phi,\eta):=\frac {(-1)^n}{  \int_0^\infty \Phi}\int_0^\infty \Phi(\tfrac uU) \sum_{\substack{ \chi_1,\dots,\chi_n \in \Irr(G) \\ \forall i, \chi_i \neq \vartheta   \\ \chi_1\cdots\chi_n =  1 }}\prod_{j=1}^{n} \psi^*_\eta(\e^u,L/K,\chi_j)  \frac{\d u}{U}\,.
 \]

 We recall that~\cite{BFJ}*{Lemma 4.1} implies the estimate
$$
\psi^*_\eta(\e^u;L/K,\chi) =  - \sum_{\gamma_\chi\neq 0} \e^{i\gamma_\chi u} \widehat \eta\Big( \frac {\gamma_\chi}{2\pi} \Big)+O_\eta\big(\e^{-\frac u2} \log (A(\chi)+2)\big),
$$
as well as the bound (see~\cite{BFJ}*{Lemmas 3.1 and 4.2})
$$
\psi^*_\eta(\e^u;L/K,\chi) \ll_\eta  \log (A(\chi)+2)\ll\chi(1) [K:\Q] \log(\rd_L+2).
$$

Applying these estimates, it follows that 
\begin{multline}
 M_{n,1}^{\mathrm{main}}(U;L/K,\Phi,\eta)= \frac {1}{|G| \int_0^\infty \Phi} \sum_{k=1}^n \binom nk(d^k+(-1)^kd) \int_0^\infty \Phi(\tfrac uU)\\ \sum_{\substack{ \chi_1,\dots,\chi_{n-k} \in \Irr(G) \\ \forall i, \chi_i \neq \vartheta  }}\sum_{\substack{ \gamma_{\chi_1},\dots,\gamma_{\chi_{n-k}} \neq 0\\ \gamma_{\vartheta,1},\dots,\gamma_{\vartheta,k}  \neq 0}}  \!\!\!\!\! \e^{i(\sum_{j=1}^k\gamma_{\vartheta,j}+\sum_{i=1}^{n-k} \gamma_{\chi_i}) u}  \prod_{i=1}^{n-k} \widehat \eta\Big( \frac {\gamma_{\chi_i}}{2\pi} \Big) \prod_{j=1}^{k} \widehat \eta\Big( \frac {\gamma_{\vartheta,j}}{2\pi} \Big)   \frac{\d u}{U}+E, 
  \label{eq:second term}
\end{multline}
where the error term $E$ satisfies the bound 
\begin{align*}
   |M_{n,1}^{\mathrm{err}}(U;L/K,\Phi,\eta)|+ |E|&\ll_{\Phi} U^{-1}(\kappa_\eta[K:\Q] \log(\rd_L+2))^n\sum_{\substack{ \chi_1,\dots,\chi_n \in \Irr(G) \\ \forall i, \chi_i \neq \vartheta   \\ \chi_1\cdots\chi_n = 1 }}1 
    \\ &\quad+ (|G|U)^{-1} \sum_{k=1}^n\binom nk d^{2k} (\kappa_\eta[K:\Q] \log(\rd_L+2) )^{n}  \sum_{\substack{ \chi_1,\dots,\chi_{n-k} \in \Irr(G) \\ \forall i, \chi_i \neq \vartheta  }}1
    \\ &\ll  U^{-1} (d+1)^{2n-2} \big(\kappa_\eta[K:\Q] \log(\rd_L+2)\big)^n.
\end{align*} 
{We observe that, as $\Phi\in \mathcal U$,  $ \Phi$ is even and
$$ \int_0^\infty \Phi\Big(\frac uU\Big) \e^{i\gamma u} \frac {\d u}U=\frac 12 \widehat \Phi\Big(\frac{U\gamma}{2\pi}\Big). $$}
Then, the main term in~\eqref{eq:second term} is 
\begin{multline*}
\geq   \frac {1}{ 2 \int_0^\infty \Phi} \sum_{k=1}^n \binom nk\frac{ d^{k-1}+(-1)^k}{d+1}  \\ \sum_{\substack{ \chi_1,\dots,\chi_{n-k} \in \Irr(G) \\ \forall i, \chi_i \neq \vartheta  }}\sum_{\substack{ \gamma_{\chi_1},\dots,\gamma_{\chi_{n-k}} \neq 0\\ \gamma_{\vartheta,1},\dots,\gamma_{\vartheta,k}  \neq 0}}  \!\!\!\!\! \widehat \Phi\Big(\frac{U}{2\pi}\Big( \sum_{j=1}^k\gamma_{\vartheta,j}+\sum_{i=1}^{n-k} \gamma_{\chi_i}\Big)  \Big)  \prod_{i=1}^{n-k} \widehat \eta\Big( \frac {\gamma_{\chi_i}}{2\pi} \Big) \prod_{j=1}^{k} \widehat \eta\Big( \frac {\gamma_{\vartheta,j}}{2\pi} \Big), 
\end{multline*}
which by positivity of $\widehat \Phi$ is greater than or equal to the second term in~\eqref{eq:lemma 3.5}. Arguing similarly for the first term on the right hand side of~\eqref{eq:seperation M1n two sums} yields the claimed estimate.


\end{proof}

In the next step we will bound the quantity $S_{2m,2m}(L/K,\eta)$ which appears in~\eqref{eq:lemma 3.5 simplifiee} through a second application of positivity. The key idea which will guide us in this task is that we believe that in the case where $L/\Q$ is Galois, the multiset of imaginary parts of non-trivial zeros of $\prod_{\chi \in \Irr(\Gal(L/\Q))} L(s,L/\Q,\chi)$ is linearly independent over the rationals; this is known as hypothesis LI. With this in mind, we will drop all the terms for which LI does not hold; in particular, we will only consider the case where $n$ and $k$ are even.
In the case where $K \neq \Q$, we refer the reader to~\cite{FJ}*{Section 1.2} for possible counterexamples to LI.

\begin{lemma}\label{lem:minS}
Let $G$ be a doubly transitive Frobenius group of order $d(d+1) \geq 6$ with kernel $N$ and abelian complement $H$ of order $d$. Let $\vartheta$ be the unique non-abelian irreducible character of $G$, let $L/K$ be a Galois extension of number fields of group $G$, and assume GRH. Let $\eta\in \mathcal S_\delta$. If $  m\in \mathbb N$, then we have the bound
$$
 S_{2m,2m}(L/K,\eta)\geq
\mu_{2m}
   b_0(\vartheta, \widehat \eta^2)^{m-1} 
\max\Big\{ b_0 (\vartheta, \widehat \eta^2)- 2m!m(m-1)||\widehat
   \eta||_{\infty}^2,0\Big\}. 
 $$

\end{lemma}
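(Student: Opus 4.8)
The plan is to prove the slightly stronger lower bound
$$S_{2m,2m}(L/K,\eta)\ \geq\ \mu_{2m}\,b_0(\vartheta,\widehat\eta^2)^{m-1}\big(b_0(\vartheta,\widehat\eta^2)-m(m-1)\|\widehat\eta\|_\infty^2\big),$$
from which the asserted inequality follows immediately: every summand in the definition of $S_{2m,2m}$ is non-negative because $\widehat\eta\geq 0$ for $\eta\in\mathcal S_\delta$, so also $S_{2m,2m}\geq 0$; combining the two bounds gives $S_{2m,2m}\geq \mu_{2m}b_0^{m-1}\max\{b_0-m(m-1)\|\widehat\eta\|_\infty^2,0\}$, and since $m(m-1)\leq 2\,m!\,m(m-1)$ this dominates the claimed quantity. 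The whole argument rests on \emph{positivity}: as $\widehat\eta\geq 0$, I may bound $S_{2m,2m}$ from below by restricting its defining sum to any sub-family of admissible tuples. The sub-family I keep consists of the \emph{canceling-pair} tuples, using that $\vartheta$ is real (Proposition~\ref{prop ortho pour G=d(d+1)}), so that $L(s,L/K,\vartheta)$ is self-dual and the multiset of imaginary parts of its non-trivial zeros is invariant under $\gamma\mapsto-\gamma$. Concretely, for each fixed-point-free involution $\pi$ of $\{1,\dots,2m\}$—there are exactly $\mu_{2m}$ of them, as recalled in the proof of Lemma~\ref{lemma combinatorics}—and each assignment of a non-trivial zero $\gamma^{(i)}$ to the $i$-th pair of $\pi$, I form the tuple placing $\gamma^{(i)}$ and $-\gamma^{(i)}$ on the two paired indices. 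Such a tuple satisfies $\sum_j\gamma_{\vartheta,j}=0$ automatically, and contributes $\prod_{i=1}^m\widehat\eta(\gamma^{(i)}/2\pi)\widehat\eta(-\gamma^{(i)}/2\pi)=\prod_{i=1}^m\widehat\eta(\gamma^{(i)}/2\pi)^2$.

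The essential bookkeeping step is \emph{injectivity}: I would additionally impose that the absolute values $|\gamma^{(1)}|,\dots,|\gamma^{(m)}|$ be pairwise distinct. Under this constraint each value $\pm|\gamma^{(i)}|$ occurs exactly once among the entries of the tuple, so the decomposition into canceling pairs—and hence the pair $\big(\pi,(\gamma^{(i)})_i\big)$—is uniquely recovered from the tuple. Thus distinct choices produce distinct tuples, and summing over all of them is a genuine restriction of the sum defining $S_{2m,2m}$. Bounding each zero's multiplicity below by $1$ exactly as in the proof of Lemma~\ref{lemme V2s ineg}, I obtain
$$S_{2m,2m}(L/K,\eta)\ \geq\ \mu_{2m}\sum_{\substack{\gamma^{(1)},\dots,\gamma^{(m)}\\ |\gamma^{(i)}|\text{ pairwise distinct}}}\ \prod_{i=1}^m\widehat\eta\Big(\frac{\gamma^{(i)}}{2\pi}\Big)^2,$$
the $\gamma^{(i)}$ running over imaginary parts of non-trivial zeros of $L(s,L/K,\vartheta)$.

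It then remains to remove the distinctness constraint. Writing $f(\gamma):=\widehat\eta(\gamma/2\pi)^2$ and $b_0:=b_0(\vartheta,\widehat\eta^2)=\sum_\gamma f(\gamma)$, the unrestricted product-sum equals $b_0^m$, and I would estimate the discarded ``collision'' terms by a union bound over the $\binom m2$ index pairs that might satisfy $|\gamma^{(i)}|=|\gamma^{(j)}|$. For a fixed such pair, summing the two constrained indices with $\gamma^{(j)}\in\{\gamma^{(i)},-\gamma^{(i)}\}$ and the remaining $m-2$ freely contributes $\big(2\sum_\gamma f(\gamma)^2\big)b_0^{m-2}$ (using that $f$ is even), and $f(\gamma)\leq\|\widehat\eta\|_\infty^2$ gives $\sum_\gamma f(\gamma)^2\leq\|\widehat\eta\|_\infty^2 b_0$. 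Hence the collisions total at most $\binom m2\cdot 2\|\widehat\eta\|_\infty^2 b_0\cdot b_0^{m-2}=m(m-1)\|\widehat\eta\|_\infty^2 b_0^{m-1}$, so the restricted sum is $\geq b_0^{m-1}\big(b_0-m(m-1)\|\widehat\eta\|_\infty^2\big)$; multiplying by $\mu_{2m}$ yields the stronger bound stated at the outset, and therefore the lemma.

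The main obstacle is precisely the over-counting issue in the middle step: without the distinct-absolute-values constraint a single canceling-pair tuple is represented by several pairs $\big(\pi,(\gamma^{(i)})_i\big)$—one for each way of re-matching zeros of equal modulus—so the matching count $\mu_{2m}$ would over-count and the lower bound would fail. Imposing distinctness is exactly what repairs this, at the cost of the collision correction estimated above. The only remaining technical point, zeros of higher multiplicity, is handled verbatim as in Lemma~\ref{lemme V2s ineg}, using $b_0\leq b_0^+$ for the main term and the multiplicity bound of~\cite{CCM} where needed.
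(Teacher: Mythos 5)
Your argument is correct and follows essentially the same route as the paper: both proofs use positivity of $\widehat\eta$ together with the realness of $\vartheta$ to restrict the sum defining $S_{2m,2m}$ to tuples built out of cancelling pairs $\{\gamma,-\gamma\}$, count those configurations via the $\mu_{2m}$ perfect matchings, and control coincidences among the $|\gamma^{(i)}|$ using $\|\widehat\eta\|_{\infty}$. The only differences are in the bookkeeping: the paper organizes the matching count through the factor $\binom{2m}{m}$ (splitting positive and negative positions) and delegates the final combinatorial estimate to~\cite{BFJ}*{Lemma 5.3}, whereas your involution-plus-distinctness argument is self-contained and even produces the slightly sharper collision term $m(m-1)\|\widehat\eta\|_{\infty}^{2}$ in place of $2\,m!\,m(m-1)\|\widehat\eta\|_{\infty}^{2}$.
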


\begin{proof}
Using positivity, thanks to \cite{BFJ}*{Lemma 5.3},  we have
  \begin{align*}
  S_{2m,2m}(L/K,\eta)&=
\sum_{\substack{  \gamma_{\vartheta,1},\dots,\gamma_{\vartheta,2m}  \neq 0\\  \gamma_{\vartheta,1}+\dots+\gamma_{\vartheta,2m} = 0}}     
\prod_{j=1}^{2m} \widehat \eta\Big( \frac {\gamma_{\vartheta,j}}{2\pi} \Big)\cr
&\geq \binom {2m}{m}
\sum_{\substack{  \gamma_{\vartheta,1},\dots,\gamma_{\vartheta,2m}  \neq 0\\  \gamma_{\vartheta,j}>0 \,(\forall j\leq m) ,\, \gamma_{\vartheta,j}<0 \,(\forall j> m) \\ (\forall \gamma\in \mathbb R)\,\,
|\{ j : \gamma_{\vartheta,j}=\gamma\}|=|\{ j : \gamma_{\vartheta,j}=-\gamma\}|}}     \prod_{j=1}^{2m} \widehat \eta\Big( \frac {\gamma_{\vartheta,j}}{2\pi} \Big)\cr &
\geq \mu_{2m} b_0 (\vartheta, \widehat \eta^2)^{m-1}\max\Big\{ b_0 (\vartheta, \widehat \eta^2)- 2m!m(m-1)||\widehat
   \eta||_{\infty}^2,0\Big\}.
  \end{align*} 
  \end{proof}

To conclude the proof of Theorem~\ref{theorem main 2}, we will need the following bound.

 \begin{lemma}\label{lem:condGpFrob}
 Let $G$ be a doubly transitive Frobenius group of order $d(d+1) \geq 6$ with kernel $N$ and abelian complement $H$ of order $d$. Let $L/K$ be a Galois extension of number fields of group $G$.
Then one has the following lower bound:
$$ \log A(\vartheta) \geq (d-1) [K:\Q] \log (\rd_L). $$ 
%
\end{lemma}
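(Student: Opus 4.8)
The plan is to read off the bound directly from the explicit character values of the unique non-abelian character $\vartheta$, combined with the same comparison between Artin conductors and root discriminants that was already used in the proof of Lemma~\ref{lemma evaluation Artin conductor}, namely \cite{FJ}*{Lemma 4.2}. Recall from Proposition~\ref{prop ortho pour G=d(d+1)} (which rests on Corollary~\ref{cor:doubtrans}) that
$$\vartheta(1)=d,\qquad \vartheta(g)=-1 \text{ for } g\in N\smallsetminus\{1\},\qquad \vartheta(g)=0 \text{ for } g\in G\smallsetminus N,$$
so that the ``defect'' $\max_{g\neq 1}|\vartheta(g)|$ is exactly $1$. The lower-bound half of \cite{FJ}*{Lemma 4.2}, applied in Lemma~\ref{lemma evaluation Artin conductor}, gives for any $\chi\in\Irr(G)$ the inequality
$$\log A(\chi)\geq \Big(\chi(1)-\max_{g\neq 1}|\chi(g)|\Big)[K:\Q]\log(\rd_L).$$
Specialising to $\chi=\vartheta$ yields $\log A(\vartheta)\geq (d-1)[K:\Q]\log(\rd_L)$ at once. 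The only things to check are that the character values above are the correct ones and that the cited inequality is applied in the lower-bound direction; there is no genuine analytic obstacle, the content being entirely bookkeeping.

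For completeness I would also record the elementary derivation of the inequality being invoked, which makes the argument self-contained. Writing the conductor through the ramification filtration one has
$$\log A(\chi)=\chi(1)\log d_K+\sum_{g\neq 1}c(g)\big(\chi(1)-\chi(g)\big),\qquad c(g):=\sum_{\mathfrak p}\frac{\log\mathcal N\mathfrak p}{|G_{\mathfrak p,0}|}\,\big|\{i\geq 0: g\in G_{\mathfrak p,i}\}\big|\geq 0,$$
where $G_{\mathfrak p,i}$ are the ramification groups at a prime above $\mathfrak p$. Taking $\chi=\mathrm{reg}_G$ (for which $\mathrm{reg}_G(g)=|G|\,\delta_{g=1}$) and using $\log d_L=\log A(\mathrm{reg}_G)$ identifies $\sum_{g\neq 1}c(g)=[K:\Q]\log(\rd_L)-\log d_K=:T\geq 0$, the nonnegativity following from the tower bound $d_L\geq d_K^{|G|}$. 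Substituting back produces the clean identity $\log A(\chi)=\chi(1)[K:\Q]\log(\rd_L)-\sum_{g\neq 1}c(g)\chi(g)$, and bounding the final sum by $T\max_{g\neq 1}|\chi(g)|\leq [K:\Q]\log(\rd_L)\max_{g\neq 1}|\chi(g)|$ recovers \cite{FJ}*{Lemma 4.2} in the form used above.

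The main point to get right is thus the verification that $\vartheta$ vanishes off $N$ and equals $-1$ on $N\smallsetminus\{1\}$, together with $T\geq 0$; neither is hard. If one prefers to avoid the ramification filtration altogether, there is a purely global alternative via the conductor--discriminant formula: since the remaining characters are the $d$ linear characters inflated from $G/N\simeq H$, one has $\log d_L=\log d_{L^N}+d\log A(\vartheta)$, and the tower inequality $d_L\geq d_{L^N}^{\,d+1}$ then forces $\log A(\vartheta)\geq \tfrac1{d+1}\log d_L=d[K:\Q]\log(\rd_L)$, which even sharpens the stated constant $d-1$ to $d$ and a fortiori implies the claim. I expect the first, essentially one-line route through \cite{FJ}*{Lemma 4.2} to be the intended one.
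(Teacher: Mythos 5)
Your main argument is correct and is essentially the paper's own proof: the paper applies \cite{BFJ}*{Lemma 3.2} to the class function $t=\vartheta$, for which the quantity $S_\vartheta=\max_{a\neq 1}|\vartheta(a)|/\vartheta(1)=1/d$ is exactly your ratio $\max_{g\neq 1}|\chi(g)|/\chi(1)$ computed from Proposition~\ref{prop ortho pour G=d(d+1)}, so the two routes coincide. Your closing alternative via the conductor--discriminant formula $\log d_L=\log d_{L^N}+d\log A(\vartheta)$ and the tower inequality $d_L\geq d_{L^N}^{\,d+1}$ is also valid and, as you note, even yields the sharper constant $d$ in place of $d-1$.
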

 
 \begin{proof}
 Define for any non zero class function $t\colon G\to \C$ (see~\cite{BFJ}*{(10)}),
 $$
 S_t=\frac{\max_{a\neq 1}\big|\sum_{\chi\in\Irr(G)}|\widehat{t}(\chi)|^2\chi(a)\big|}{\lambda_{1,2}(t)}\,,
 $$
 { where $\lambda_{1,2}(t)=\sum_{\chi\in\Irr(G)}\chi(1)|\widehat{t}(\chi)|^2$.}
 One has $S_\vartheta=\frac 1d$ by the definition and properties of $\vartheta$ (see Proposition~\ref{prop ortho pour G=d(d+1)}). The bound is then obtained by applying~\cite{BFJ}*{Lemma 3.2}.
 \end{proof}

We are now ready to prove Theorem~\ref{theorem main 2}.
  
  \begin{proof}[Proof of Theorem~\ref{theorem main 2}]
  
  Combining Lemmas~\ref{lem:minM1} and~\ref{lem:minS}, we deduce that
  \begin{multline*}
   M_{2m,1} (U;L/K,\Phi,\eta)\geq
 \frac{ d^{2m-1}+1}{d+1} \mu_{2m}
   b_0(\vartheta, \widehat \eta^2)^{m-1} 
\max\Big\{ b_0 (\vartheta, \widehat \eta^2)- 2m!m(m-1)||\widehat
   \eta||_{\infty}^2,0\Big\}
   \\ +O_{\Phi}\Big( \frac{(d+1)^{4m-2} (\kappa_\eta[K:\Q] \log(\rd_L+2))^{2m}}U \Big).
 \end{multline*}

Applying~\cite{BFJ}*{Lemma 4.2} and~\cite{FJ}*{Lemma 4.1} to estimate $b_0 (\vartheta, \widehat \eta^2)$, we see that the first term in this expression is
  \begin{align*} 
 &\geq \frac{\mu_{2m} d^{2m}}{d(d+1)} \widehat \eta^{2m}(0)  \Big( 
  \log A(\vartheta )  +O_{\eta}(d[K:\Q])\Big)^{m }
  \\
  & 
  \geq  \frac{\mu_{2m} d^{2m}}{d(d+1)} \widehat \eta^{2m}(0) \Big((d-1) [K:\Q]\log (\rd_L)\Big)^m\Big(1+O_{\eta}\Big(\frac m{\log(\rd_L+2) } \Big)\Big),
  \end{align*}
  by Lemma~\ref{lem:condGpFrob}. The first claimed result follows.
  
  As for the second, note that in the limit as $U\rightarrow \infty$, the variance $M_{2,1}(U;L/K,\Phi,\eta)$ is asymptotically at most $d^2 [K:\Q]^2\log(\rd_L)^2(1+o_{d\rightarrow \infty}(1))$ (combine~\eqref{eq:parseval} with~\cite{BFJ}*{Lemma 4.1}). The claimed bound follows from combining this upper bound with the lower bound~\eqref{eq:borneinfTh1.2}.

   \end{proof}

\section*{Acknowledgements}
The work of the third author was partly funded by the ANR through project FLAIR (ANR-17-CE40-0012). The authors would like to thank {\it Villa La Stella} in Florence for their hospitality and excellent working conditions during a stay in March 2022 where substantial parts of this work were accomplished.

\begin{bibdiv} 
\begin{biblist}

\bib{Be}{article}{
   author={Bella\"{\i}che, Jo\"{e}l},
   title={Th\'{e}or\`eme de Chebotarev et complexit\'{e} de Littlewood},
   journal={Ann. Sci. \'{E}c. Norm. Sup\'{e}r. (4)},
   volume={49},
   date={2016},
   number={3},
   pages={579--632},
}

\bib{BF1}{article}{
  author={de la Bretèche, Régis},
  author={Fiorilli, Daniel},
  title={On a conjecture of Montgomery and Soundararajan},
  journal={Math. Annalen},
   volume={381},
   date={2021},
   number={1},
   pages={575--591}
}

\bib{BF2}{article}{
   author={de la Bretèche, Régis},
  author={Fiorilli, Daniel},
   title={Moments of moments of primes in arithmetic progressions},
   journal={Proc. Lond. Math. Soc. (3)},
   volume={127},
   date={2023},
   number={1},
   pages={165--220},
}

\bib{BFJ}{article}{
   author={de la Bret\`eche, R\'egis},
   author={Fiorilli, Daniel},
   author={Jouve, Florent},
   title={Moments in the Chebotarev density theorem: general class
   functions},
   journal={Algebra Number Theory},
   volume={19},
   date={2025},
   number={3},
   pages={481--520},
}

\bib{CCM}{article}{
   author={Carneiro, Emanuel},
   author={Chandee, Vorrapan},
   author={Milinovich, Micah B.},
   title={A note on the zeros of zeta and $L$-functions},
   journal={Math. Z.},
   volume={281},
   date={2015},
   number={1-2},
   pages={315--332}
}

\bib{FJ}{article}{
   author={Fiorilli, Daniel},
   author={Jouve, Florent},
   title={Distribution of Frobenius elements in families of Galois
   extensions},
   journal={J. Inst. Math. Jussieu},
   volume={23},
   date={2024},
   number={3},
   pages={1169--1258},
}

\bib{Gag}{article}{
   author={Gagola, Stephen M., Jr.},
   title={Characters vanishing on all but two conjugacy classes},
   journal={Pacific J. Math.},
   volume={109},
   date={1983},
   number={2},
   pages={363--385},
}

\bib{Hup}{book}{
   author={Huppert, Bertram},
   title={Character theory of finite groups},
   series={De Gruyter Expositions in Mathematics},
   volume={25},
   publisher={Walter de Gruyter \& Co., Berlin},
   date={1998},
}

\bib{IK}{book}{
   author={Iwaniec, Henryk},
   author={Kowalski, Emmanuel},
   title={Analytic number theory},
   series={American Mathematical Society Colloquium Publications},
   volume={53},
   publisher={American Mathematical Society, Providence, RI},
   date={2004},
   pages={xii+615},
}

\bib{Odl}{article}{
    author = {Odlyzko, A. M.},
     title = {Bounds for discriminants and related estimates for class
              numbers, regulators and zeros of zeta functions: a survey of
              recent results},
   journal = {S\'{e}m. Th\'{e}or. Nombres Bordeaux (2)},
    volume = {2},
      date = {1990},
    number = {1},
     pages = {119--141},
 
}

\bib{Sny}{article}{
   author={Snyder, Noah},
   title={Groups with a character of large degree},
   journal={Proc. Amer. Math. Soc.},
   volume={136},
   date={2008},
   number={6},
   pages={1893--1903},
}

\end{biblist} 
\end{bibdiv}

\end{document}